\newtheorem{thm}{Theorem}[section]
\newtheorem{lem}{Lemma}[section]
\newtheorem{cor}{Corollary}[section]
\newtheorem{defi}{Definition}[section]
\newtheorem{rem}{Remark}[section]
\newtheorem*{claim}{\it{Claim}}
\newtheorem*{theorem}{\it{Theorem}}
\begin{document}

\title{Sufficient conditions for distributional chaos of type I}
\author{Noriaki Kawaguchi}
\subjclass[2020]{37B05, 37B20, 37D45}
\keywords{distributional chaos, mixing Anosov flow, minimal, equicontinuous, Furstenberg family}
\address{Department of Mathematical and Computing Science, School of Computing, Institute of Science Tokyo, 2-12-1 Ookayama, Meguro-ku, Tokyo 152-8552, Japan}
\email{gknoriaki@gmail.com}

\begin{abstract}
Distributional chaos of type I (DC1) is a stronger variant of Li--Yorke chaos. In this paper, we consider the fact that the time-one map of a mixing Anosov flow exhibits DC1 and generalize it to obtain simple sufficient conditions for DC1.
\end{abstract}

\maketitle

\markboth{NORIAKI KAWAGUCHI}{Sufficient conditions for distributional chaos of type I}

\section{Introduction}

The concept of {\em chaos} plays a central role in the modern theory of dynamical systems. A mathematical definition of chaos, so-called  {\em Li--Yorke chaos}, was given in \cite{LY}. In \cite{SS}, three statistical variants of Li--Yorke chaos, collectively called {\em distributional chaos}, were introduced (initially) for interval maps. Among the three, {\em distributional chaos of type I} (DC1) is the strongest one, representing a stronger variant of Li--Yorke chaos. While the Li--Yorke type chaos has been generalized in terms of Furstenberg families, this paper focuses specifically on DC1 (see \cite{LYe, O} for background). We show that the time-one map of a mixing Anosov flow exhibits DC1 and generalize it to obtain sufficient conditions for DC1.
 
We begin by defining DC1 which is generalized for $n$-tuples, $n\ge2$, in \cite{LO, TF}. Throughout, $X$ denotes a compact metric space endowed with a metric $d$. 

\begin{defi}
\normalfont
Given a continuous map $f\colon X\to X$, an $n$-tuple $(x_1,x_2,\dots,x_n)\in X^n$, $n\ge2$, is said to be {\em DC1-$n$-$\delta$-scrambled} for $\delta>0$ if
\begin{equation*}
\limsup_{m\to\infty}\frac{1}{m}|\{0\le i\le m-1\colon\min_{1\le j<k\le n}d(f^i(x_j),f^i(x_k))>\delta\}|=1,
\end{equation*}
and
\begin{equation*}
\limsup_{m\to\infty}\frac{1}{m}|\{0\le i\le m-1\colon\max_{1\le j<k\le n}d(f^i(x_j),f^i(x_k))<\epsilon\}|=1
\end{equation*}
for all $\epsilon>0$. Let ${\rm DC1}_n^\delta(X,f)$ denote the set of DC1-$n$-$\delta$-scrambled $n$-tuples and let
\[
{\rm DC1}_n(X,f)=\bigcup_{\delta>0}{\rm DC1}_n^\delta(X,f).
\]
A subset $S$ of  $X$ is said to be {\em DC1-$n$-scrambled} (resp.\:{\em DC1-$n$-$\delta$-scrambled}) if
\[
(x_1,x_2,\dots ,x_n)\in{\rm DC1}_n(X,f)\:\text{(resp.\:${\rm DC1}_n^\delta(X,f)$)}
\]
for all distinct $x_1,x_2,\dots,x_n\in S$. We say that $f$ exhibits the {\em distributional $n$-chaos of type I} (${\rm DC1}_n$) if there is an uncountable DC1-$n$-scrambled subset of $X$.
\end{defi}

We recall a simplified version of a theorem of Mycielski \cite{M}. Mycielski's theorem is used in \cite{BGKM} to prove that positive topological entropy implies Li--Yorke chaos. A comprehensive treatment of Mycielski's theorem and some of its applications to topological dynamics are given in \cite{A}.

For a topological space $Z$, a subset $S$ of $Z$ is said to be {\em residual} if $S$ contains a countable intersection of dense open subsets of $Z$. By Baire category theorem, if $Z$ is a complete metric space, then every countable intersection of residual subsets of $Z$ is dense in $Z$. A topological space $Z$ is said to be {\em perfect} if $Z$ has no isolated point. For a complete metric space $Z$, a subset $S$ of $Z$ is said to be a {\em Mycielski set} if $S$ is a union of countably many Cantor sets (see \cite{BGKM}). Note that for a Mycielski set $S$ in $Z$ and an open subset $U$ of $Z$ with $S\cap U\ne\emptyset$, $S\cap U$ is an uncountable set.  

\begin{theorem}[Mycielski]
Let $Z$ be a perfect complete separable metric space. If $R_n$ is a residual subset of $Z^n$ for each $n\ge2$, then there is a Mycielski set $S$ which is dense in $Z$ and satisfies $(x_1,x_2,\dots,x_n)\in R_n$ for all $n\ge2$ and distinct $x_1,x_2,\dots,x_n\in S$.
\end{theorem}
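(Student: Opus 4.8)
The plan is to prove the theorem by the classical Cantor-scheme (or \emph{fusion}) method: I would build a tree of shrinking nonempty open sets whose branch space is a countable union of Cantor sets, and use the Baire-category description of each $R_n$ to force every product box cut out by the tree into the dense open sets defining $R_n$. First I would fix a countable base $\{W_j\}_{j\ge1}$ of nonempty open subsets of $Z$ (using separability), and for each $n\ge2$ write $R_n\supseteq\bigcap_{k\ge1}G_{n,k}$ with each $G_{n,k}\subseteq Z^n$ dense and open (the definition of residual), arranging $G_{n,1}\supseteq G_{n,2}\supseteq\cdots$. The target is a tree $T$ whose root has countably many children $r_1,r_2,\dots$ and which is binary below each $r_j$; I attach to every node $t$ a nonempty open set $U_t$. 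The scheme conditions to maintain are: (i) $\overline{U_{t'}}\subseteq U_t$ whenever $t'$ is a child of $t$; (ii) $\overline{U_t}\cap\overline{U_{t'}}=\emptyset$ for distinct $t,t'$ of a common level; (iii) $\operatorname{diam}U_t\to0$ along every branch; and (iv) each node below the root has exactly two children. Conditions (ii) and (iv) are arranged using that $Z$ is perfect, so any nonempty open set contains two nonempty open subsets whose closures are disjoint and contained in it.

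I would then run a single recursion producing finite data at each stage, revealing one new root $r_j$ at a time (so every finite stage involves only finitely many active nodes), governed by a bookkeeping fixed in advance that enumerates all pairs $\bigl((t_1,\dots,t_n),k\bigr)$ consisting of a finite tuple of distinct nodes occurring at a common level together with an index $k$; since each finite stage has only finitely many nodes, the collection of all such pairs is countable. When a pair $\bigl((t_1,\dots,t_n),k\bigr)$ is processed, I would use that $G_{n,k}$ is dense and open in $Z^n$ to find a basic box $V_1\times\cdots\times V_n\subseteq G_{n,k}\cap(U_{t_1}\times\cdots\times U_{t_n})$ with $\overline{V_i}\subseteq U_{t_i}$, and pass to the $V_i$ as the refined sets below $t_1,\dots,t_n$. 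I would also, at suitable stages, place the roots so that each $W_i$ contains some $U_{r_j}$, which forces density, and interleave the splitting steps demanded by (iv) and the diameter shrinking demanded by (iii).

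Finally I would extract $S$. By (i), (iii) and completeness, each infinite branch determines a unique point of $Z$; by (iv) and (iii) the branches through a fixed root $r_j$ form a copy of the Cantor set, so their image $C_j$ is a Cantor set, and $S:=\bigcup_j C_j$ is a Mycielski set, dense because each $W_i$ contains some $C_j$. If $x_1,\dots,x_n$ are distinct points of $S$, their defining branches pass through $n$ distinct nodes from some level onward, and stay separated thereafter; for each $k$ the pair consisting of those separated nodes and $k$ was processed at some stage, after which $x_1,\dots,x_n$ were confined to a box lying inside $G_{n,k}$, whence $(x_1,\dots,x_n)\in\bigcap_k G_{n,k}\subseteq R_n$.

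I expect the technical heart, and the main obstacle, to be the bookkeeping: I must meet infinitely many box-inclusions (over all arities $n$, all tuples of nodes, and all indices $k$) simultaneously with the disjointness, splitting, and density requirements, without a later refinement spoiling an earlier achievement. The decisive observations are that an inclusion $U_{t_1}\times\cdots\times U_{t_n}\subseteq G_{n,k}$ persists when the factors are shrunk, that refinements only shrink sets and hence never violate the already-established disjointness (ii), and that once two nodes have disjoint closures all their descendants remain apart. Ordering the enumeration so that a pair is handled only after its nodes have appeared, and verifying that the density and perfect-splitting tasks merely consume additional stages, is thus where the care lies; each individual step is routine.
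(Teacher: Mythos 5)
The paper itself offers no proof of this statement: it is quoted as a known theorem of Mycielski \cite{M} (with \cite{A} as a reference for a full treatment), so the only comparison available is with the standard argument, and your Cantor-scheme/fusion strategy is indeed that standard route. Its architecture is sound, but one step, as you describe it, would fail. You process each pair $\bigl((t_1,\dots,t_n),k\bigr)$ exactly once, by choosing a single box $V_1\times\cdots\times V_n\subseteq G_{n,k}\cap(U_{t_1}\times\cdots\times U_{t_n})$ and ``passing to the $V_i$ as the refined sets below $t_1,\dots,t_n$''. But when the pair's turn in a fixed enumeration arrives, the nodes $t_i$ will in general already carry several generations of descendants, whose attached sets are pairwise disjoint-closure open sets spread through $U_{t_i}$; these cannot all be shrunk into one small box $V_i$ (one may well have $V_i\cap U_\ell=\emptyset$ for some current leaf $\ell$ below $t_i$), so the step is incompatible with your conditions (i), (ii), (iv). Nor can you insist on processing every pair while its nodes are still leaves: each node-tuple must be paired with \emph{infinitely many} indices $k$, while the splitting demanded by (iv) happens after finitely many stages. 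The standard repair changes the bookkeeping: at stage $m$, for \emph{all} tuples of distinct current leaves (of every arity $n$ up to the number of leaves) and all $k\le m$, shrink the finitely many leaf sets successively so that every such leaf box lies in $G_{n,k}$ --- a finite task at each stage, and your (correct) persistence observation guarantees that earlier inclusions survive later shrinking. The verification then uses, for given distinct $x_1,\dots,x_n$ and given $k$, any stage $m\ge k$ by which the $n$ branches pass through distinct leaves.

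A second, smaller inconsistency: your conditions (ii) and the density requirement are jointly unsatisfiable at the root level. All roots $r_j$ are children of the root, hence lie at a common level, so (ii) demands $\overline{U_{r_j}}\cap\overline{U_{r_{j'}}}=\emptyset$; yet density forces some root set into \emph{every} base element $W_i$, and since $U_{r_1}$ contains some base element $W_i$, the root placed inside that $W_i$ cannot have closure disjoint from $\overline{U_{r_1}}$. The fix is to require (ii) only among nodes within the same root subtree and to allow the Cantor sets $C_j$ to overlap: a Mycielski set need not be a disjoint union, and your final verification never uses cross-root disjointness of the \emph{sets} --- only distinctness of the \emph{nodes}, since the box chosen inside the dense open set $G_{n,k}$ needs its factors nonempty and open, not disjoint. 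With these two repairs your argument is exactly the standard proof of the theorem.
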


\begin{rem}
\normalfont
Let $f\colon X\to X$ be a continuous map. By the above theorem, for a sequence $(\delta_n)_{2\le n< b+1}$ of positive numbers, where $2\le b\le\infty$, if
\[
{\rm DC1}_n^{\delta_n}(X,f)
\]
is a residual subset of $X^n$ for each $2\le n<b+1$, then there is a dense Mycielski set $S$ in $X$ such that $S$ is DC1-$n$-$\delta_n$-scrambled for all $2\le n<b+1$, in particular, $f$ exhibits ${\rm DC1}_n$ for all $2\le n<b+1$.
\end{rem}

Let $M$ be a closed differentiable manifold endowed with a Riemannian metric $d$ and  let $F\colon\mathbb{R}\times M\to M$ be a mixing Anosov flow. {\em  Anosov} means that $M$ is a hyperbolic set for $F$ (see \cite{FH} for details and background information). Let $F^t(x)=F(t,x)$ for all $(t,x)\in\mathbb{R}\times M$ and let $g=F^1\colon M\to M$, the time-one map for $F$. Let
\[
W^{ss}(x)=\{y\in M\colon\lim_{t\to\infty}d(F^t(x),F^t(y))=0\},
\]
$x\in M$, and note that
\[
W^{ss}(x)=\{y\in M\colon\lim_{i\to\infty}d(g^i(x),g^i(y))=0\}
\]
for all $x\in M$. By results in Chapter 6 of \cite{FH}, a periodic point $p$ for $F$ satisfies $M=\overline{W^{ss}(p)}$. Let $\tau$ denote the period of $p$, i.e.,
\[
\tau=\min\{t>0\colon F^t(p)=p\}>0,
\]
and let $\Lambda$ denote the orbit of $p$:
\[
\Lambda=\{F^t(p)\colon t\in\mathbb{R}\}=\{F^t(p)\colon t\in[0,\tau)\}.
\]
We have
\[
\overline{W^{ss}(F^t(p))}=\overline{F^t(W^{ss}(p))}=F^t(\overline{W^{ss}(p)})=F^t(M)=M
\]
for all $t\in\mathbb{R}$; therefore, $M=\overline{W^{ss}(x)}$ for all $x\in\Lambda$. Let $S^1=\{z\in\mathbb{C}\colon|z|=1\}$ and note that
\[
g|_\Lambda\colon\Lambda\to\Lambda
\]
is topologically conjugate to the circle rotation
\[
R_{\tau^{-1}}\colon S^1\to S^1
\]
defined by $R_{\tau^{-1}}(z)=z\cdot e^{2\pi i\tau^{-1}}$ for all $z\in S^1$, i.e., there is a homeomorphism $h\colon\Lambda\to S^1$ such that
\[
h\circ g|_\Lambda=R_{\tau^{-1}}\circ h.
\]
If $\tau\in\mathbb{Q}$, then every $z\in S^1$ is a periodic point for $R_{\tau^{-1}}$ and so every $x\in\Lambda$ is a periodic point for $g|_\Lambda$. If $\tau\notin\mathbb{Q}$, then  $R_{\tau^{-1}}$ is {\em minimal} and so is $g|_\Lambda$. In both cases, $R_{\tau^{-1}}$ is {\em equicontinuous} and so is $g|_{\Lambda}$.

We prove the following claim.

\begin{claim}
There is a sequence $(\delta_n)_{n\ge2}$ of positive numbers such that
\[
{\rm DC1}_n^{\delta_n}(M,g)
\]
is a residual subset of $M^n$ for all $n\ge2$.
\end{claim}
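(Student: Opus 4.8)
The plan is to show that for a suitable $\delta_n$ the set $\mathrm{DC1}_n^{\delta_n}(M,g)$ is a \emph{dense $G_\delta$} subset of the complete metric space $M^n$, since a dense $G_\delta$ is residual and the intersection of two residual sets is again residual (Baire). First I would record the $G_\delta$ structure. For a tuple $\bar x=(x_1,\dots,x_n)$ set $\phi_i(\bar x)=\max_{1\le j<k\le n}d(g^i(x_j),g^i(x_k))$ and $\psi_i(\bar x)=\min_{1\le j<k\le n}d(g^i(x_j),g^i(x_k))$; both are continuous. For fixed $m,k$ the count $|\{i<m:\phi_i(\bar x)<1/k\}|$ is lower semicontinuous in $\bar x$, because a finite collection of strict inequalities holding at $\bar x_0$ persists on a neighborhood; hence for each $k,N$ the set
\[
U(k,N)=\bigcup_{m\ge N}\Bigl\{\bar x:\tfrac1m\bigl|\{i<m:\phi_i(\bar x)<1/k\}\bigr|>1-1/N\Bigr\}
\]
is open, and likewise the analogous set $V(N)$ defined with the condition $\psi_i(\bar x)>\delta_n$ is open. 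Intersecting the $U(k,N)$ over all $k,N$ gives exactly the proximal condition, and intersecting the $V(N)$ over all $N$ gives exactly the $\delta_n$-separation condition, so $\mathrm{DC1}_n^{\delta_n}(M,g)$ is $G_\delta$. It then remains to prove that each of these two generating families of open sets is dense.

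For the proximal sets $U(k,N)$ I would use a single dense strong stable manifold. Fix $q\in\Lambda$, so $\overline{W^{ss}(q)}=M$. Given any $\bar x$ and $\epsilon>0$, choose $x_j'\in W^{ss}(q)$ with $d(x_j,x_j')<\epsilon$ for each $j$. Then $d(g^i(x_j'),g^i(q))\to0$ for every $j$, whence $\phi_i(\bar x')\to0$ by the triangle inequality; consequently the proportion of $i<m$ with $\phi_i(\bar x')<1/k$ tends to $1$, so $\bar x'\in U(k,N)$ for all $k,N$. This shows the proximal $G_\delta$ set is dense.

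For the separation sets I would exploit the circle orbit together with separated stable manifolds. Choose $n$ distinct points $q_1,\dots,q_n\in\Lambda$. Because $g|_\Lambda$ is equicontinuous, the family $\{g^i|_\Lambda\}_{i\ge0}$ is precompact in $C(\Lambda,\Lambda)$ and every limit is injective (equivalently, one can read this off the rigid rotation $R_{\tau^{-1}}$ via $h$), so there is a constant $c_n>0$ with $d(g^i(q_j),g^i(q_k))\ge c_n$ for all $i$ and all $j<k$. I would set $\delta_n=c_n/3$. Given $\bar x$ and $\epsilon>0$, use $\overline{W^{ss}(q_j)}=M$ to pick $x_j'\in W^{ss}(q_j)$ with $d(x_j,x_j')<\epsilon$. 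Since $d(g^i(x_j'),g^i(q_j))\to0$, for all large $i$ these are $<c_n/3$, and then
\[
\psi_i(\bar x')\ge c_n-\tfrac{c_n}{3}-\tfrac{c_n}{3}=\tfrac{c_n}{3}>\delta_n ,
\]
so the proportion of $i<m$ with $\psi_i(\bar x')>\delta_n$ tends to $1$ and $\bar x'\in V(N)$ for all $N$; hence the separation $G_\delta$ set is dense. Note that this choice of $\delta_n$ depends only on $n$ (through $q_1,\dots,q_n$), and that both perturbations use only dense strong stable manifolds and equicontinuity of $g|_\Lambda$, so the rational and irrational cases of $\tau$ are handled uniformly. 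Combining the two density statements, $\mathrm{DC1}_n^{\delta_n}(M,g)$ is a dense $G_\delta$, hence residual.

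The main obstacle I anticipate is the uniform separation estimate $\inf_i d(g^i(q_j),g^i(q_k))>0$ for distinct $q_j,q_k$: this is the one place where the equicontinuity (i.e.\ distal, rigid-rotation) structure of $g|_\Lambda$ is essential, and it must be turned into a \emph{single} constant $c_n$ valid for all times and all pairs. Everything else — the semicontinuity giving the $G_\delta$ form, and the two perturbation arguments — is routine once the separation constant is in hand.
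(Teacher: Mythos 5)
Your proof is correct and takes essentially the same route as the paper, which proves the Claim via its Lemma 1.1 applied with $S=\Lambda$ and $W^{ss}(x)\subset V_\epsilon^s(x)$: both halves of the DC1 condition are shown residual separately (countable intersections of open sets defined by strict inequalities over finitely many iterates), with density supplied by products of dense stable manifolds of points on the periodic orbit, and the uniform separation constant coming from distality of the equicontinuous map $g|_\Lambda$. One cosmetic fix: with $\delta_n=c_n/3$ your displayed estimate only gives $\psi_i(\bar x')\ge\delta_n$ rather than the strict inequality the definition requires; since $d(g^i(x_j'),g^i(q_j))\to0$, just take the tail errors $<c_n/6$ (or set $\delta_n=c_n/4$) to get $\psi_i(\bar x')>\delta_n$ for all large $i$.
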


\begin{rem}
\normalfont
As stated in Remark 1.1, it follows from Mycielski's theorem and the above claim that $g$ exhibits ${\rm DC1}_n$ for all $n\ge2$.
\end{rem}

We say that a map $f\colon X\to X$ is {\em equicontinuous} if for every $\epsilon>0$, there is $\delta>0$ such that $d(x,y)\le\delta$ implies
\[
\sup_{i\ge0}d(f^i(x),f^i(y))\le\epsilon
\]
for all $x,y\in X$. We know that if an equicontinuous map $f\colon X\to X$ is surjective, then $f$ is a homeomorphism and $f^{-1}$ is also equicontinuous (cf.\:\cite{AG, Ma}). If $f\colon X\to X$ is an equicontinuous homeomorphism, then $f$ is {\em distal}, i.e.,
\[
\inf_{i\ge0}\min_{1\le j<k\le n}d(f^i(x_j),f^i(x_k))>0
\]
for all $n\ge2$ and distinct $x_1,x_2,\dots,x_n\in X$.

We recall the basic definition of minimality.

\begin{defi}
\normalfont
For a continuous map $f\colon X\to X$, a subset $S$ of $X$ is said to be {\em $f$-invariant} if $f(S)\subset S$. A closed $f$-invariant subset $K$ of $X$ is called a {\em minimal set} for $f$ if closed $f$-invariant subsets of $K$ are only $\emptyset$ and $K$. This is equivalent to that $K=\overline{\{f^i(x)\colon i\ge0\}}$ for all $x\in K$. We say that a continuous map $f\colon X\to X$ is {\em minimal} if $X$ is a minimal set for $f$.
\end{defi}

\begin{rem}
\normalfont
Since a minimal continuous map $f\colon X\to X$ is surjective, every minimal equicontinuous map $f\colon X\to X$ is an equicontinuous homeomorphism. We know that if $f\colon X\to X$ is an equicontinuous homeomorphism, then $X$ is a disjoint union of minimal sets for $f$. We also know that every minimal equicontinuous homeomorphism $f\colon X\to X$ is topologically conjugate to a minimal rotation
\[
R_a\colon G\to G
\]
of a compact Abelian group $(G,+)$ where $a\in G$ and $R_a(z)=z+a$ for all $z\in G$ (see Theorem 2.42 of \cite{Ku}).  
\end{rem}

Let $f\colon X\to X$ be a continuous map. We say that a subset $S$ of $X$ is a {\em distal set} for $f$ if
\[
\inf_{i\ge0}\min_{1\le j<k\le n}d(f^i(x_j),f^i(x_k))>0
\]
for all $n\ge2$ and distinct $x_1,x_2,\dots,x_n\in S$. Note that a subset $S$ of $X$ with $|S|\le1$ is by definition a distal set for $f$. For $x\in X$ and $\epsilon>0$, we define a subset $V_\epsilon^s(x)$ of $X$ by
\[
V_\epsilon^s(x)=\{y\in X\colon\limsup_{i\to\infty}d(f^i(x),f^i(y))\le\epsilon\}.
\]

In order to prove the above claim, it is sufficient to show the following lemma.

\begin{lem}
Let $f\colon X\to X$ be a continuous map and let $S$ be a subset of $X$. If
\begin{itemize}
\item $S$ is a distal set for $f$,
\item $X=\overline{V_\epsilon^s(x)}$ for all $x\in S$ and $\epsilon>0$,
\end{itemize}
then for any $2\le n<|S|+1$, ${\rm DC1}_n^{\delta_n}(X,f)$ is a residual subset of $X^n$ for some $\delta_n>0$.
\end{lem}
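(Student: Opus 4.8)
The plan is to realize ${\rm DC1}_n^{\delta_n}(X,f)$ as a countable intersection of dense open subsets of $X^n$, so that it is a dense $G_\delta$ and in particular residual. First I would record that the two defining conditions are each $G_\delta$. For fixed $i$ the map $(x_1,\dots,x_n)\mapsto\min_{1\le j<k\le n}d(f^i(x_j),f^i(x_k))$ is continuous, so the count $C_m(x_1,\dots,x_n)=|\{0\le i\le m-1\colon\min_{j<k}d(f^i(x_j),f^i(x_k))>\delta\}|$ is lower semicontinuous, being a finite sum of indicators of open sets. Hence for each $\eta>0$ and $N\ge1$ the set $U_{\delta,\eta,N}$ of tuples admitting some $m\ge N$ with $C_m>(1-\eta)m$ is open, and the separation condition at scale $\delta$ equals $\bigcap_{l,N}U_{\delta,1/l,N}$ (since the density is always $\le1$, its $\limsup$ equals $1$ iff it exceeds $1-1/l$ for some $m\ge N$, for every $l,N$). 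The same argument applied to $\max_{j<k}d(f^i(x_j),f^i(x_k))<\epsilon$ shows the proximality condition equals $\bigcap_{k,l,N}W_{1/k,1/l,N}$ for analogous open sets $W_{\epsilon,\eta,N}$. Thus it remains to choose $\delta_n$ and to prove that every $U_{\delta_n,\eta,N}$ and every $W_{\epsilon,\eta,N}$ is dense in $X^n$.

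Since $n<|S|+1$, I may fix $n$ distinct points $s_1,\dots,s_n\in S$, and distality of $S$ gives $c:=\inf_{i\ge0}\min_{1\le j<k\le n}d(f^i(s_j),f^i(s_k))>0$; I then set $\delta_n=c/2$. For the proximality sets, given a basic open box $O_1\times\cdots\times O_n$, I would use that $V_{\epsilon/3}^s(s_1)$ is dense (this is the second hypothesis with $x=s_1$) to pick $x_j\in O_j\cap V_{\epsilon/3}^s(s_1)$ for each $j$. Then $\limsup_i d(f^i(x_j),f^i(s_1))\le\epsilon/3$ for all $j$, so $\limsup_i\max_{j<k}d(f^i(x_j),f^i(x_k))\le2\epsilon/3<\epsilon$; hence eventually $\max_{j<k}d(f^i(x_j),f^i(x_k))<\epsilon$, the proximal density tends to $1$, and the tuple lies in $W_{\epsilon,\eta,N}$ for large $m$. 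This proves each $W_{\epsilon,\eta,N}$ dense.

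For the separation sets I would argue dually. Given $O_1\times\cdots\times O_n$, density of $V_{c/8}^s(s_j)$ lets me choose $x_j\in O_j\cap V_{c/8}^s(s_j)$ for each $j$. Since $\limsup_i d(f^i(x_j),f^i(s_j))\le c/8$, for all large $i$ one has $d(f^i(x_j),f^i(s_j))<c/4$, and then the triangle inequality with $d(f^i(s_j),f^i(s_k))\ge c$ yields $\min_{j<k}d(f^i(x_j),f^i(x_k))>c-c/4-c/4=c/2=\delta_n$. Hence the separation density tends to $1$ and the tuple lies in $U_{\delta_n,\eta,N}$ for large $m$, proving each $U_{\delta_n,\eta,N}$ dense. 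Combining, ${\rm DC1}_n^{\delta_n}(X,f)=\bigl(\bigcap_{l,N}U_{\delta_n,1/l,N}\bigr)\cap\bigl(\bigcap_{k,l,N}W_{1/k,1/l,N}\bigr)$ is a countable intersection of dense open subsets of $X^n$, hence residual.

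The point I expect to be delicate is obtaining a single scale $\delta_n$ that works simultaneously for every open box: this is precisely why I fix the $n$ points $s_1,\dots,s_n$ once and for all and read $\delta_n=c/2$ off their (positive, by distality) mutual separation, rather than choosing separating reference points adapted to each box, which would only yield a box-dependent scale. A secondary point worth stressing is that the separation and proximality requirements look contradictory for an individual tuple, yet each defines a dense $G_\delta$; it is the Baire-type structure of the intersection, not any explicit tuple, that reconciles them.
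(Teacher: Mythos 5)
Your proposal is correct and takes essentially the same route as the paper's proof: the paper likewise writes ${\rm DC1}_n^{\delta_n}(X,f)=A_{\delta_n}\cap B$ as a countable intersection of (open) sets of tuples whose density count exceeds $1-\tfrac{1}{p}$ for some $m\ge q$, anchors the separation part at a distal $n$-tuple $(a_1,\dots,a_n)\in S^n$ via the dense product $V_{\epsilon_n}^s(a_1)\times\cdots\times V_{\epsilon_n}^s(a_n)$, and anchors the proximality part at a single $a\in S$ via the dense set $V_{1/(4l)}^s(a)^n$. Your explicit verification of openness through lower semicontinuity of the counting function, and your concrete constants ($\delta_n=c/2$ with radii $c/8$ and $\epsilon/3$), merely spell out bookkeeping that the paper leaves implicit.
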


\begin{proof}
Given $2\le n<|S|+1$, let
\begin{align*}
A_\delta&=\{(x_1,x_2,\dots,x_n)\in X^n\colon\\
&\qquad\qquad\qquad\limsup_{m\to\infty}\frac{1}{m}|\{0\le i\le m-1\colon\min_{1\le j<k\le n}d(f^i(x_j),f^i(x_k))>\delta\}|=1\}\\
&=\bigcap_{p\ge1}\bigcap_{q\ge1}\bigcup_{m\ge q}\{(x_1,x_2,\dots,x_n)\in X^n\colon\\
&\qquad\qquad\qquad\frac{1}{m}|\{0\le i\le m-1\colon\min_{1\le j<k\le n}d(f^i(x_j),f^i(x_k))>\delta\}|>1-\frac{1}{p}\},
\end{align*}
$\delta>0$, and let
\begin{align*}
B&=\bigcap_{\epsilon>0}\{(x_1,x_2,\dots,x_n)\in X^n\colon\\
&\qquad\qquad\qquad\qquad\limsup_{m\to\infty}\frac{1}{m}|\{0\le i\le m-1\colon\max_{1\le j<k\le n}d(f^i(x_j),f^i(x_k))<\epsilon\}|=1\}\\
&=\bigcap_{l\ge1}\bigcap_{p\ge1}\bigcap_{q\ge1}\bigcup_{m\ge q}\{(x_1,x_2,\dots,x_n)\in X^n\colon\\
&\qquad\qquad\qquad\qquad\frac{1}{m}|\{0\le i\le m-1\colon\max_{1\le j<k\le n}d(f^i(x_j),f^i(x_k))<\frac{1}{l}\}|>1-\frac{1}{p}\}.
\end{align*}
Note that
\[
{\rm DC1}_n^\delta(X,f)=A_\delta\cap B
\]
for all $\delta>0$. Since $S$ is a distal set for $f$, there are $(a_1,a_2,\dots,a_n)\in S^n$ and $\delta_n,\epsilon_n>0$ such that
\[
\inf_{i\ge0}\min_{1\le j<k\le n}d(f^i(a_j),f^i(a_k))>\delta_n+\epsilon_n.
\]
Since
\[
V_{\epsilon_n}^s(a_1)\times V_{\epsilon_n}^s(a_2)\times\cdots\times V_{\epsilon_n}^s(a_n)
\]
is a dense subset of $X^n$ and contained in
\begin{align*}
&\bigcup_{m\ge q}\{(x_1,x_2,\dots,x_n)\in X^n\colon\\
&\qquad\qquad\qquad\frac{1}{m}|\{0\le i\le m-1\colon\min_{1\le j<k\le n}d(f^i(x_j),f^i(x_k))>\delta_n\}|>1-\frac{1}{p}\}
\end{align*}
for all $p,q\ge1$, we see that $A_{\delta_n}$ is a residual subset of $X^n$. By taking $a\in S$, since $V_{\frac{1}{4l}}^s(a)^n$, $l\ge1$, is a dense subset of $X^n$ and contained in
\begin{align*}
&\bigcup_{m\ge q}\{(x_1,x_2,\dots,x_n)\in X^n\colon\\
&\qquad\qquad\qquad\frac{1}{m}|\{0\le i\le m-1\colon\max_{1\le j<k\le n}d(f^i(x_j),f^i(x_k))<\frac{1}{l}\}|>1-\frac{1}{p}\}
\end{align*}
for all $l,p,q\ge1$, we see that $B$ is a residual subset of $X^n$. It follows that ${\rm DC1}_n^{\delta_n}(X,f)$ is a residual subset of $X^n$, thus the proof has been completed. 
\end{proof}

\begin{rem}
\normalfont
We should note that, in the context of shadowing, a similar argument was presented by Li, Li, and Tu in Section 3 of \cite{LLT}  (see, in particular, Lemma 3.2; Lemma 3.3; and Theorem 3.4 of \cite{LLT}). 
\end{rem}

\begin{rem}
\normalfont
For a continuous map $f\colon X\to X$, let $Per(f)$ denote the set of periodic points for $f$:
\[
Per(f)=\{y\in X\colon f^i(y)=y\:\:\text{for some $i>0$}\}.
\]
For $x\in X$, the $\omega$-limit set $\omega(x,f)$ is defined as the set of $y\in X$ such that
\[
\lim_{j\to\infty}f^{i_j}(x)=y
\]
for some sequence $0\le i_1<i_2<\cdots$. Given any $x\in X$, $\{f^i(x)\colon i\ge0\}$ is a distal set for $f$ exactly if
\begin{itemize}
\item[--] $|\{f^i(x)\colon i\ge0\}|<\infty$ and $x\in Per(f)$; or
\item[--] $|\{f^i(x)\colon i\ge0\}|=\infty$ and $\omega(x,f)\cap Per(f)=\emptyset$.
\end{itemize}
If $f$ is minimal, then $\{f^i(x)\colon i\ge0\}$ is a distal set for $f$ for all $x\in X$. We also remark that if $f$ is surjective, then for any $x\in X$ and $\epsilon>0$, $X=\overline{V_\epsilon^s(x)}$ implies $X=\overline{V_\epsilon^s(f^i(x))}$ for all $i\ge0$.
\end{rem}

By this remark, we obtain the following corollary of Lemma 1.1.

\begin{cor}
Let $f\colon X\to X$ be a continuous map and let $\Lambda$ be a minimal set for $f$. If
\begin{itemize}
\item $f$ is surjective,
\item there is $x\in\Lambda$ such that $X=\overline{V_\epsilon^s(x)}$ for all $\epsilon>0$, 
\end{itemize}
then for any $2\le n<|\Lambda|+1$, ${\rm DC1}_n^{\delta_n}(X,f)$ is a residual subset of $X^n$ for some $\delta_n>0$.
\end{cor}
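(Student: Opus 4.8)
The plan is to deduce the corollary from Lemma 1.1 by taking the subset $S$ to be the forward orbit $S=\{f^i(x)\colon i\ge0\}$ of the distinguished point $x\in\Lambda$, and then to read off the two hypotheses of that lemma from Remark 1.4. Since $\Lambda$ is a minimal set for $f$, the restriction $f|_\Lambda\colon\Lambda\to\Lambda$ is a minimal map, and by the equivalent characterization of minimal sets in Definition 1.2 we have $S\subset\Lambda$ with $\overline{S}=\Lambda$. First I would check that $S$ is a distal set for $f$. Because the forward iterates of any points of $S$ remain in $\Lambda$, the quantity $\inf_{i\ge0}\min_{1\le j<k\le n}d(f^i(x_j),f^i(x_k))$ is unchanged whether the $x_j$ are regarded as points of $X$ or of $\Lambda$; hence $S$ is a distal set for $f$ if and only if it is one for $f|_\Lambda$. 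The minimality clause of Remark 1.4 applied to $f|_\Lambda$ then gives that $S$ is a distal set, so the first hypothesis of Lemma 1.1 holds.

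Second, I would verify that $X=\overline{V_\epsilon^s(y)}$ for every $y\in S$ and every $\epsilon>0$. This is precisely where surjectivity enters: by the last assertion of Remark 1.4, if $f$ is surjective then $X=\overline{V_\epsilon^s(x)}$ implies $X=\overline{V_\epsilon^s(f^i(x))}$ for all $i\ge0$. Since every $y\in S$ has the form $f^i(x)$, the assumed covering property of $x$ propagates to all of $S$, yielding the second hypothesis of Lemma 1.1.

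With both hypotheses in hand, Lemma 1.1 produces, for each $2\le n<|S|+1$, a number $\delta_n>0$ such that ${\rm DC1}_n^{\delta_n}(X,f)$ is residual in $X^n$. It then remains to identify the range $2\le n<|S|+1$ with the asserted range $2\le n<|\Lambda|+1$, i.e.\ to show $|S|=|\Lambda|$. I would argue by cases: if $\Lambda$ is finite it is a single periodic orbit on which $f$ acts as a cyclic permutation, so $S=\Lambda$ and $|S|=|\Lambda|$; if $\Lambda$ is infinite then $\overline{S}=\Lambda$ is infinite, which forces $S$ to be infinite, so $|S|=|\Lambda|=\infty$. In either case the two index ranges coincide, which completes the argument.

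The individual steps are short, and I do not anticipate a serious obstacle; the point requiring the most care is the distality of $S$, which rests on the fact that an infinite minimal map has no periodic points (so that $\omega(x,f)\cap Per(f)=\emptyset$ in the characterization of Remark 1.4), together with the observation that distality of $S$ is insensitive to whether it is tested in $X$ or in the invariant set $\Lambda$.
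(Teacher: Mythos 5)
Your proposal is correct and follows essentially the same route as the paper: the paper deduces the corollary directly from Lemma 1.1 via Remark 1.4, with $S=\{f^i(x)\colon i\ge0\}$ serving as the distal set (by minimality of $f|_\Lambda$) and surjectivity propagating $X=\overline{V_\epsilon^s(x)}$ to $X=\overline{V_\epsilon^s(f^i(x))}$ for all $i\ge0$, exactly as you argue. Your explicit verification that distality is insensitive to the ambient space and that $|S|$ and $|\Lambda|$ give the same range of $n$ merely fills in details the paper leaves implicit.
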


We have observed that the time-one map of a mixing Anosov flow exhibits ${\rm DC1}_n$ for all $n\ge2$. The proof presented above is based on the fact that the strong stable manifold of a periodic point is dense in the phase space. By relaxing this condition, we can derive sufficient conditions for ${\rm DC1}_n$, $n\ge2$.

The main result of this paper is the following theorem. A continuous map $f\colon X\to X$ is said to be {\em transitive} if for any non-empty open subsets $U,V$ of $X$, it holds that $f^i(U)\cap V\ne\emptyset$ for some $i\ge 0$.
\begin{thm}
Given a continuous map $f\colon X\to X$ and a closed $f$-invariant subset $\Lambda$ of $X$, if the following conditions are satisfied
\begin{itemize}
\item[(1)] $f|_\Lambda\colon\Lambda\to\Lambda$ is minimal and equicontinuous,
\item[(2)] $X=\overline{\bigcup_{x\in\Lambda}V_\epsilon^s(x)}$ for all $\epsilon>0$,
\item[(3)] $f|_\Lambda\times f\colon\Lambda\times X\to\Lambda\times X$ is transitive,
\end{itemize}
then for any $2\le n<|\Lambda|+1$, ${\rm DC1}_n^{\delta_n}(X,f)$ is a residual subset of $X^n$ for some $\delta_n>0$.
\end{thm}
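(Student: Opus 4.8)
The plan is to reduce the theorem to Corollary 1.1 by producing a single well-chosen point of $\Lambda$ whose strong stable set is dense, and to isolate the construction of that point as the one genuinely new ingredient. Conditions (1) and (3) already supply the remaining hypotheses of Corollary 1.1: since $f|_\Lambda$ is minimal, $\Lambda$ is a minimal set for $f$; and since $f|_\Lambda\times f$ is transitive, its factor $f$ is transitive on the compact space $X$, hence surjective (using that $Y=\Lambda\times X$ is perfect when $\Lambda$ is infinite, and treating the periodic-orbit case directly). Thus, once I exhibit some $x_0\in\Lambda$ with $X=\overline{V_\epsilon^s(x_0)}$ for every $\epsilon>0$, Corollary 1.1 immediately yields that ${\rm DC1}_n^{\delta_n}(X,f)$ is residual for all $2\le n<|\Lambda|+1$, with $\delta_n>0$.

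So the whole theorem rests on the following claim, which is where I expect to spend all the effort: there is $x_0\in\Lambda$ with $V_\epsilon^s(x_0)$ dense in $X$ for all $\epsilon>0$. I would prove this by a Baire category argument in $\Lambda$. Fixing a countable base $\{W_m\}$ of $X$, it suffices to show that for each $m$ and each $\epsilon>0$ the set $\Lambda_{\epsilon,m}=\{x\in\Lambda: V_\epsilon^s(x)\cap W_m\ne\emptyset\}$ is residual in $\Lambda$; intersecting over $m$ and over $\epsilon=1/l$, and using that $\Lambda$ is a compact (hence Baire) space, produces the desired $x_0$ since $V_{1/l}^s(x_0)$ dense for all $l$ forces $V_\epsilon^s(x_0)$ dense for all $\epsilon$. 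The heart of the matter is therefore the single statement that $\Lambda_{\epsilon,m}$ is dense (and in fact residual) in $\Lambda$, equivalently that the stable relation $E_\epsilon=\{(y,z)\in\Lambda\times X: z\in V_\epsilon^s(y)\}$ meets $U\times W_m$ for every nonempty open $U\subset\Lambda$.

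To prove that $E_\epsilon$ meets $U\times W_m$ I would combine the three hypotheses as follows. Condition (2) furnishes genuine stable points densely in $X$: in any prescribed open set there is a point lying exactly in some $V_{\epsilon'}^s(b)$. Because $f|_\Lambda$ is a minimal homeomorphism (condition (1)), its inverse is defined and the backward orbit of any anchor is relatively dense; and, crucially, an exact preimage of a stable point is again stable: if $f^N(z)=z'$ with $z'\in V_{\epsilon'}^s(y')$ and $y=(f|_\Lambda)^{-N}(y')$, then $z\in V_{\epsilon'}^s(y)$, since $d(f^i y,f^i z)=d(f^{i-N}y',f^{i-N}z')$ for $i\ge N$ and the finitely many earlier terms do not affect the $\limsup$. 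Finally, condition (3) is exactly the synchronization device: product transitivity lets me match, at a common time $N$, the requirement that the anchor $(f|_\Lambda)^{-N}(y')$ fall into $U$ (an open condition on the $\Lambda$-coordinate, solvable by relative density of the backward orbit of $y'$) with the requirement that $z'$ be reached from inside $W_m$, i.e.\ $z'\in f^N(W_m)$ (the $X$-coordinate condition). Equicontinuity of $f|_\Lambda$ is then used to absorb the mismatch between $y$ and the actual point $x\in U$ wanted as anchor, replacing $V_{\epsilon'}^s(y)$ by $V_\epsilon^s(x)$ at the cost of enlarging $\epsilon'$ to $\epsilon$.

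The step I expect to be the main obstacle is precisely this synchronization, because strong stable membership is not robust: a point merely close to a stable point need not be stable, so the approximate hits that topological transitivity produces cannot be used naively. I would circumvent this by insisting on exact preimages of the genuinely stable points supplied by (2)—so that transitivity is only ever asked to satisfy the open condition $z'\in f^N(W_m)$ while stability is inherited exactly from the target—or, if the backward orbit of a single stable point cannot be steered into $W_m$, by replacing the one-shot argument with a nested Baire construction in $W_m$ in which successive applications of (3) pin the forward orbit of the point under construction ever more tightly to a fixed stable orbit, the limit point being genuinely stable with anchor controlled in $U$. Verifying that one of these routes meets $U\times W_m$—that is, that the exact preimages of the dense family of stable points are themselves dense along the syndetic set of admissible times—is the crux on which the argument stands or falls.
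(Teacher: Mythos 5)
Your global reduction is sound: conditions (1) and (3) do give that $\Lambda$ is a minimal set and that $f$ is surjective (transitivity of the factor $f$ on a compact space suffices; the perfectness caveat is unnecessary), so by Corollary 1.1 everything indeed hinges on producing one point $x_0\in\Lambda$ with $X=\overline{V_\epsilon^s(x_0)}$ for all $\epsilon>0$. But that crux — your claim that $E_\epsilon$ meets $U\times W_m$ — is exactly where the proposal has a genuine gap, and neither of your two mechanisms closes it. In the one-shot variant, the condition $z'\in f^N(W_m)$ for a \emph{fixed} stable point $z'$ is not something transitivity can ever deliver: transitivity of $f|_\Lambda\times f$ only places forward images of points of $W_m$ into open neighborhoods of $z'$, never exactly onto $z'$, and $z'$ need have no $f^N$-preimage in $W_m$ at all. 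You also cannot repair this by choosing the stable point inside $f^N(W_m)$ afterwards, since $f^N(W_m)$ may have empty interior and condition (2) only gives density, which does not force intersection with such a set. In the nested-Baire fallback, the obstruction you yourself flagged resurfaces fatally: membership in $V_\epsilon^s(x)$ is a $\limsup$ condition, requiring closeness to the anchor orbit at \emph{all} sufficiently large times, whereas iterated transitivity controls the orbit only on finite windows at times it does not let you prescribe; between windows (or under the time shifts transitivity forces) the orbit is uncontrolled, so the limit point satisfies at best a $\liminf$-type proximality — which is why this style of argument yields Li--Yorke chaos as in Theorem 3.1 of \cite{AGHSY} (cf. Remark 1.3), but not DC1.

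The missing idea is the paper's interior-of-closure device, which makes approximate transitivity hits harmless \emph{before} any synchronization is attempted. Using equicontinuity of $f|_\Lambda$ to double $\epsilon$ twice (Lemmas 2.1 and 2.2), condition (2) upgrades to $X=\bigcup_{x\in\Gamma}\overline{V_{4\epsilon}^s(x)}$ for a countable dense $\Gamma\subset\Lambda$, and Baire category \emph{in $X$} (not in $\Lambda$, as in your scheme) yields a single anchor $p$ with ${\rm int}[\overline{V_{4\epsilon}^s(p)}]\ne\emptyset$. Then one application of product transitivity suffices (Lemma 2.3): the $\Lambda$-coordinate steers a proxy $z$ near $p$ so that $d(p,f^i(p))\le\delta$, whence $V_\epsilon^s(f^i(p))\subset V_{2\epsilon}^s(p)$ by equicontinuity, while the $X$-coordinate pushes a point of the interior into the target $V$; since the landing happens in the \emph{interior of the closure}, a genuine stable point $q\in V_\epsilon^s(p)$ nearby still satisfies $f^i(q)\in V$ by continuity of $f^i$ at the fixed finite time, and $f^i(q)\in f^i(V_\epsilon^s(p))\subset V_\epsilon^s(f^i(p))$. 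Note the direction: the paper pushes stable sets \emph{forward}, where $f^i(V_\epsilon^s(x))\subset V_\epsilon^s(f^i(x))$ holds for any continuous map, rather than demanding the exact backward preimages your variant needs. Your correct observation that exact preimages of stable points are stable, and your correct diagnosis that stability is not an open condition, are both present in spirit, but without the Baire step in $X$ producing ${\rm int}[\overline{V_{4\epsilon}^s(p)}]\ne\emptyset$ the synchronization cannot be executed, and the proof does not go through.
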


\begin{rem}
\normalfont
In \cite{AGHSY}, it is shown that for a non-trivial transitive continuous map $f\colon X\to X$, if there is a closed $f$-invariant subset $\Lambda$ of $X$ such that
\[
f|_\Lambda\times f\colon\Lambda\times X\to\Lambda\times X
\]
is transitive, then $f$ exhibits (dense and uniform) Li--Yorke chaos (see Theorem 3.1 of \cite{AGHSY}).
\end{rem}

\begin{rem}
\normalfont
Let $f\colon X\to X,g\colon Y\to Y$ be continuous self-maps of compact metric spaces $X,Y$. It is known that $f\times g\colon X\times Y\to X\times Y$
is transitive if
\begin{itemize}
\item[(1)] $f$ is totally transitive; and $Y$ is a periodic orbit or an odometer,
\item[(2)] $f$ is weakly scattering; and $g$ is minimal and equicontinuous,
\item[(3)] $f$ is scattering; and $g$ is minimal
\end{itemize}
(see, e.g., \cite{AGHSY,HY} and also Remark 1.8 for details).
\end{rem}

We recall the notion of {\em Furstenberg families}.

\begin{defi}
\normalfont
Let $\mathbb{N}_0=\{0\}\cup\mathbb{N}=\{0,1,2,\dots\}$ and let $\mathcal{F}\subset2^{\mathbb{N}_0}$. We say that
$\mathcal{F}$ is a {\em Furstenberg family} if the following conditions are satisfied
\begin{itemize}
\item (hereditary upward) For any $A,B\subset\mathbb{N}_0$, $A\in\mathcal{F}$ and $A\subset B$ implies $B\in\mathcal{F}$,
\item (proper) $\mathcal{F}\ne\emptyset$ and $\mathcal{F}\ne2^{\mathbb{N}_0}$.
\end{itemize}
\end{defi}

For a Furstenberg family $\mathcal{F}$, we define its dual family $\mathcal{F}^\ast$ by
\[
\mathcal{F}^\ast=\{A\subset\mathbb{N}_0\colon A\cap B\ne\emptyset\:\:\text{for all $B\in\mathcal{F}$}\},
\]
which is also a Furstenberg family. For $A\subset\mathbb{N}_0$ with $0\in A$, let $\Delta(A)$ (or $A-A$) denote the set of $i\in\mathbb{N}_0$ such that $i=k-j$ for some $j,k\in A$. For a Furstenberg family $\mathcal{F}$, we define a Furstenberg family $\Delta(\mathcal{F})$ by
\[
\Delta(\mathcal{F})=\{B\subset\mathbb{N}_0\colon\text{$\Delta(A)\subset B$ for some $A\in\mathcal{F}$}\}.
\]

Let $f\colon X\to X$ be a continuous map and let $\mathcal{F}$ be a Furstenberg family. For $x\in X$ and $\epsilon>0$, let
\[
N(x,\epsilon)=\{i\in\mathbb{N}_0\colon d(x,f^i(x))\le\epsilon\}.
\]
We say that $x\in X$ is {\em $\mathcal{F}$-recurrent} if $N(x,\epsilon)\in\mathcal{F}$ for all $\epsilon>0$. We denote by $R(f,\mathcal{F})$ the set of $\mathcal{F}$-recurrent points for $f$. For subsets $A,B$ of $X$, let
\[
N(A,B)=\{i\in\mathbb{N}_0\colon f^i(A)\cap B\ne\emptyset\}.
\]
We say that $f$ is {\em $\mathcal{F}$-transitive} if $N(U,V)\in\mathcal{F}$ for all non-empty open subsets $U,V$ of $X$. 

We obtain the following corollary of Theorem 1.1.

\begin{cor}
Given a continuous map $f\colon X\to X$, a closed $f$-invariant subset $\Lambda$ of $X$, and a Furstenberg family $\mathcal{F}$, if the following conditions are satisfied
\begin{itemize}
\item[(1)] $f|_{\Lambda}\colon\Lambda\to\Lambda$ is minimal and equicontinuous,
\item[(2)] $X=\overline{\bigcup_{x\in\Lambda}V_\epsilon^s(x)}$ for all $\epsilon>0$,
\item[(3)] $\Lambda\cap R(f,\mathcal{F})\ne\emptyset$,
\item[(4)] $f$ is $\Delta(\mathcal{F})^\ast$-transitive,
\end{itemize}
then for any $2\le n<|\Lambda|+1$, ${\rm DC1}_n^{\delta_n}(X,f)$ is a residual subset of $X^n$ for some $\delta_n>0$.
\end{cor}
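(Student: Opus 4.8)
The goal is to deduce Corollary 1.3 from Theorem 1.1, so the plan is to verify that conditions (1)--(4) of the corollary imply conditions (1)--(3) of the theorem. Conditions (1) and (2) are identical in both statements, so they require no work. The only substantive task is to show that conditions (3) and (4) of the corollary---namely $\Lambda\cap R(f,\mathcal{F})\ne\emptyset$ and $\Delta(\mathcal{F})^\ast$-transitivity of $f$---together imply condition (3) of the theorem, that $f|_\Lambda\times f\colon\Lambda\times X\to\Lambda\times X$ is transitive.

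To prove transitivity of the product map, I would fix non-empty open product sets $U_1\times U_2$ and $V_1\times V_2$ in $\Lambda\times X$ (with $U_1,V_1$ open in $\Lambda$ and $U_2,V_2$ open in $X$), and seek some $i\ge0$ with $(f|_\Lambda\times f)^i(U_1\times U_2)\cap(V_1\times V_2)\ne\emptyset$, i.e.\ $i\in N(U_1,V_1)\cap N(U_2,V_2)$ where the first hitting-time set is computed in $\Lambda$ and the second in $X$. The strategy is to show the first set lies in $\Delta(\mathcal{F})$ and the second set lies in $\Delta(\mathcal{F})^\ast$; since $\Delta(\mathcal{F})^\ast$ is by definition the dual family, any member of $\Delta(\mathcal{F})^\ast$ meets every member of $\Delta(\mathcal{F})$, giving the required common index. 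For the $X$-factor, condition (4) directly gives $N(U_2,V_2)\in\Delta(\mathcal{F})^\ast$ since $U_2,V_2$ are non-empty open subsets of $X$. The crux, therefore, is to establish that $N(U_1,V_1)\in\Delta(\mathcal{F})$ for every pair of non-empty open $U_1,V_1\subset\Lambda$, using conditions (1) and (3).

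I expect this last point to be the main obstacle, and it is where the hypotheses $\Lambda\cap R(f,\mathcal{F})\ne\emptyset$ and the minimal equicontinuity of $f|_\Lambda$ enter. The idea is to pick an $\mathcal{F}$-recurrent point $z\in\Lambda$ from condition (3), so that $N(z,\epsilon)=\{i\colon d(z,f^i(z))\le\epsilon\}\in\mathcal{F}$ for every $\epsilon>0$. Writing $A=N(z,\epsilon)$ and noting $0\in A$, one checks via the triangle inequality that if $j,k\in A$ then $f^j(z)$ and $f^k(z)$ are both within $\epsilon$ of $z$, and hence---using equicontinuity of $f|_\Lambda$ to control $d(f^{k-j}(w),w)$ uniformly in $w$ for points $w$ near $z$---the difference set $\Delta(A)=A-A$ consists of return times along which $f|_\Lambda$ moves points a controlled small amount. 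Because $f|_\Lambda$ is minimal, the orbit of $z$ is dense in $\Lambda$, so for the given $U_1,V_1$ one can locate orbit segments $f^a(z)\in U_1$ and $f^b(z)\in V_1$; combining density with the near-identity behaviour of $f|_\Lambda^{\,d}$ for $d\in\Delta(A)$ (equicontinuity again) shows that a suitable $\Delta(A)$-translate sends a point of $U_1$ into $V_1$, yielding $\Delta(A)\subset N(U_1,V_1)$ up to a fixed shift and hence $N(U_1,V_1)\in\Delta(\mathcal{F})$. The delicate part is handling the shift by $b-a$ and ensuring the equicontinuity estimates are applied uniformly; once $N(U_1,V_1)\in\Delta(\mathcal{F})$ is secured, the duality pairing closes the argument and Theorem 1.1 applies.
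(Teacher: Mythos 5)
Your overall reduction is the same as the paper's: conditions (1) and (2) carry over verbatim, and the whole content is showing that (3) and (4) force $f|_\Lambda\times f$ to be transitive, via hitting-time sets and the duality between $\Delta(\mathcal{F})$ and $\Delta(\mathcal{F})^\ast$. But the crux step fails as you state it. You claim $N(U_1,V_1)\in\Delta(\mathcal{F})$ for all non-empty open $U_1,V_1\subset\Lambda$, and this is false whenever $U_1\cap V_1=\emptyset$: since $\mathcal{F}$ is proper and hereditary upward, every $A\in\mathcal{F}$ is non-empty, so $0\in\Delta(A)$ for every $A\in\mathcal{F}$, hence \emph{every} member of $\Delta(\mathcal{F})$ contains $0$; yet $0\notin N(U_1,V_1)$ when $U_1\cap V_1=\emptyset$. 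What your argument actually produces is an inclusion of the form $\Delta(N(y,\delta))+j\subset N(U_1,V_1)$ for a fixed shift $j$ (coming from $f^a(z)\in U_1$, $f^b(z)\in V_1$), and $\Delta(\mathcal{F})$ is not translation-invariant: a translate of a difference set need not contain any difference set. So ``up to a fixed shift and hence $N(U_1,V_1)\in\Delta(\mathcal{F})$'' is a non sequitur, and the duality pairing you rely on to get $N(U_1,V_1)\cap N(U_2,V_2)\ne\emptyset$ collapses at exactly the point you flagged as delicate.

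The paper's Lemma 1.3 repairs precisely this by moving the shift to the $X$-factor instead of trying to absorb it into $\Delta(\mathcal{F})$: with $y\in U_1$, $j\ge0$, $g^j(y)\in V_1$ and $B_\delta(y)\subset U_1$, $g^j(B_\delta(y))\subset V_1$, one has $\Delta(N(y,\delta))+j\subset N(U_1,V_1)$; then one applies $\Delta(\mathcal{F})^\ast$-transitivity of $f$ to the pair $(U_2,f^{-j}(V_2))$ --- legitimate because $\Delta(\mathcal{F})^\ast$-transitivity implies transitivity, hence $f$ is surjective and $f^{-j}(V_2)$ is non-empty open --- so that any $i\in N(U_2,f^{-j}(V_2))\cap\Delta(N(y,\delta))$ yields $i+j\in N(U_1,V_1)\cap N(U_2,V_2)$. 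A second, smaller gap: you need $N(y,\delta)\in\mathcal{F}$ at the specific point $y\in U_1$ supplied by transitivity of $f|_\Lambda$, not merely at the one recurrent point $z$ from condition (3); your equicontinuity-plus-minimality sketch for spreading recurrence over $\Lambda$ is the right idea and is exactly the paper's Lemma 1.2 ($\Lambda\cap R(f,\mathcal{F})\ne\emptyset$ implies $\Lambda\subset R(f,\mathcal{F})$), but it must be carried out as a separate preliminary step rather than folded into the estimate for $N(U_1,V_1)$. With these two repairs your argument becomes the paper's proof.
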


In fact, this corollary is a direct consequence of Theorem 1.1 and the following two lemmas.

\begin{lem}
Let $f\colon X\to X$ be a continuous map and let $\Lambda$ be a closed $f$-invariant subset of $X$. If $f|_{\Lambda}\colon\Lambda\to\Lambda$ is minimal and equicontinuous, then for any Furstenberg family $\mathcal{F}$, $\Lambda\cap R(f,\mathcal{F})\ne\emptyset$ implies $\Lambda\subset R(f,\mathcal{F})$. 
\end{lem}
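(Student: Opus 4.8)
The plan is to fix the given point $x_0\in\Lambda\cap R(f,\mathcal{F})$ and an arbitrary $y\in\Lambda$, and to show $y\in R(f,\mathcal{F})$ by establishing, for every $\epsilon>0$, a comparison of recurrence sets of the form $N(x_0,\epsilon')\subset N(y,\epsilon)$ for a suitable $\epsilon'>0$. Once this inclusion is in hand, the $\mathcal{F}$-recurrence of $x_0$ gives $N(x_0,\epsilon')\in\mathcal{F}$, and the hereditary upward property of $\mathcal{F}$ forces $N(y,\epsilon)\in\mathcal{F}$; since $\epsilon>0$ is arbitrary this yields $y\in R(f,\mathcal{F})$, and as $y$ is arbitrary we conclude $\Lambda\subset R(f,\mathcal{F})$. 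I first note that because $\Lambda$ is $f$-invariant and $x_0,y\in\Lambda$, the sets $N(x_0,\cdot)$ and $N(y,\cdot)$ are determined entirely inside $\Lambda$, so only the dynamics of the subsystem $f|_\Lambda$ enter the argument.

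To produce the inclusion I would combine equicontinuity with minimality through a three-term estimate. Fix $\epsilon>0$ and let $\gamma>0$ be an equicontinuity modulus for $f|_\Lambda$ at level $\epsilon/3$, so that $d(u,v)\le\gamma$ implies $\sup_{i\ge0}d(f^i(u),f^i(v))\le\epsilon/3$ for $u,v\in\Lambda$. Since $f|_\Lambda$ is minimal, the forward orbit of $x_0$ is dense in $\Lambda$, so I may choose $m\ge0$ with $d(f^m(x_0),y)\le\min(\epsilon/3,\gamma)$. Now set $\epsilon'=\gamma$ and take any $i\in N(x_0,\epsilon')$, i.e.\ $d(x_0,f^i(x_0))\le\gamma$. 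The triangle inequality along the chain $y\to f^m(x_0)\to f^{m+i}(x_0)\to f^i(y)$ gives
\[
d(y,f^i(y))\le d(y,f^m(x_0))+d(f^m(x_0),f^{m+i}(x_0))+d(f^{m+i}(x_0),f^i(y)).
\]
The first term is at most $\epsilon/3$ by the choice of $m$; the middle term equals $d(f^m(x_0),f^m(f^i(x_0)))$ and is at most $\epsilon/3$ by equicontinuity applied to the $\gamma$-close pair $x_0,f^i(x_0)$; the last term equals $d(f^i(f^m(x_0)),f^i(y))$ and is at most $\epsilon/3$ by equicontinuity applied to the $\gamma$-close pair $f^m(x_0),y$. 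Hence $d(y,f^i(y))\le\epsilon$, that is $i\in N(y,\epsilon)$, proving $N(x_0,\epsilon')\subset N(y,\epsilon)$.

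The step requiring the most care is the uniformity of this estimate in $i$: the integer $m$ must be chosen once, depending only on $y$ and $\epsilon$, before letting $i$ range over $N(x_0,\epsilon')$, and both appeals to equicontinuity must hold simultaneously for all relevant exponents. This is exactly what the definition of equicontinuity provides, since the modulus $\gamma$ controls $\sup_{i\ge0}d(f^i(u),f^i(v))$ uniformly; in the middle term the fixed exponent $m$ is dominated by this supremum, while in the last term the varying recurrence exponent $i$ is absorbed by the supremum over all $i\ge0$. An alternative route would invoke Remark 1.3 to conjugate $f|_\Lambda$ to a minimal rotation $R_a$ on a compact Abelian group, where an invariant metric makes $N(z,\epsilon)$ literally independent of $z$, and then transfer this conclusion through the uniform equivalence of the two metrics on the compact space $\Lambda$; however, the direct equicontinuity argument above sidesteps the structure theorem entirely.
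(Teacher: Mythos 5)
Your proof is correct and follows essentially the same route as the paper's: minimality supplies an orbit point close to the target point, equicontinuity transfers return times uniformly via a three-term triangle inequality, and the hereditary upward property of $\mathcal{F}$ concludes. The only minor (and slightly advantageous) difference is that you transport the return set $N(x_0,\gamma)$ of the original recurrent point directly, using a second application of equicontinuity at the fixed exponent $m$, whereas the paper moves the recurrent point itself and works with $N(f^i(p),\epsilon/3)$, which requires the auxiliary fact that $f(x)\in R(f,\mathcal{F})$ for all $x\in R(f,\mathcal{F})$ (stated there without proof, though it follows from continuity) --- a step your argument bypasses entirely.
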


\begin{lem}
Let $f\colon X\to X,g\colon Y\to Y$ be continuous self-maps of compact metric spaces $X,Y$. For a Furstenberg family $\mathcal{F}$, if
\begin{itemize}
\item $f$ is $\Delta(\mathcal{F})^\ast$-transitive,
\item $g$ is transitive and satisfies $Y=R(g,\mathcal{F})$,
\end{itemize}
then $f\times g\colon X\times Y\to X\times Y$ is transitive.
\end{lem}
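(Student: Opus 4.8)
The plan is to verify transitivity of $f\times g$ on a basis: since the sets $U_1\times V_1$ with $U_1\subseteq X$ and $V_1\subseteq Y$ open form a basis of $X\times Y$, it suffices to show that for all non-empty open $U_1,U_2\subseteq X$ and $V_1,V_2\subseteq Y$ one has $N(U_1,U_2)\cap N(V_1,V_2)\neq\emptyset$, where the first return-time set is taken for $f$ and the second for $g$ (so that this intersection is exactly the return-time set of $U_1\times V_1$ to $U_2\times V_2$ under $f\times g$). The strategy is to produce, on the $g$-side, a translate $i_0+\Delta(A)$ of a member of $\Delta(\mathcal{F})$ sitting inside $N(V_1,V_2)$, and then to meet it with $N(U_1,U_2)$ by feeding the translation into the $\Delta(\mathcal{F})^\ast$-transitivity of $f$.

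For the $g$-side I would first use transitivity of $g$ to choose $i_0\ge0$ with $g^{i_0}(V_1)\cap V_2\neq\emptyset$, fix $y_1\in V_1$ with $g^{i_0}(y_1)\in V_2$, and (by continuity of $g^{i_0}$ and openness of $V_2$) shrink to a non-empty open $V_1'$ with $y_1\in V_1'\subseteq V_1$ and $g^{i_0}(V_1')\subseteq V_2$. Since $Y=R(g,\mathcal{F})$, the point $y_1$ is $\mathcal{F}$-recurrent; choosing $\epsilon>0$ so small that the closed $\epsilon$-ball about $y_1$ lies in $V_1'$ gives $N(y_1,\epsilon)\subseteq A:=\{i\in\mathbb{N}_0\colon g^i(y_1)\in V_1'\}$, whence $A\in\mathcal{F}$ by upward heredity, and clearly $0\in A$. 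Then I would check that $i_0+\Delta(A)\subseteq N(V_1,V_2)$: for $j\le k$ in $A$ the point $v=g^j(y_1)\in V_1'\subseteq V_1$ satisfies $g^{\,i_0+(k-j)}(v)=g^{i_0}(g^k(y_1))\in g^{i_0}(V_1')\subseteq V_2$, so $i_0+(k-j)\in N(V_1,V_2)$. In particular $\Delta(A)\in\Delta(\mathcal{F})$.

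On the $f$-side the key device is to absorb the translation $i_0$ into a preimage. Since $f$ is $\Delta(\mathcal{F})^\ast$-transitive it is transitive, hence surjective (a transitive continuous self-map of a compact metric space has dense, and therefore full, image), so $U_2^\ast:=f^{-i_0}(U_2)$ is a non-empty open subset of $X$. For $d\ge0$ one has the equivalence $i_0+d\in N(U_1,U_2)$ iff $f^{\,i_0+d}(U_1)\cap U_2\neq\emptyset$ iff $f^{d}(U_1)\cap f^{-i_0}(U_2)\neq\emptyset$, that is, iff $d\in N(U_1,U_2^\ast)$. By $\Delta(\mathcal{F})^\ast$-transitivity, $N(U_1,U_2^\ast)\in\Delta(\mathcal{F})^\ast$, and since $\Delta(A)\in\Delta(\mathcal{F})$ the defining property of the dual family yields some $d\in N(U_1,U_2^\ast)\cap\Delta(A)$. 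Setting $n:=i_0+d$, the first membership gives $n\in N(U_1,U_2)$ while $d\in\Delta(A)$ gives $n\in i_0+\Delta(A)\subseteq N(V_1,V_2)$; thus $n$ lies in both return-time sets, as required.

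The main obstacle, and the reason the proof is not a one-line ``dual meets family'' remark, is precisely the unavoidable translation by $i_0$: because $V_1$ and $V_2$ may be disjoint, no single orbit can witness $\Delta(A)\subseteq N(V_1,V_2)$, and indeed $N(V_1,V_2)$ need not lie in $\Delta(\mathcal{F})$ (note that $0\notin i_0+\Delta(A)$ when $i_0>0$, whereas every member of $\Delta(\mathcal{F})$ contains $0$). The passage to $U_2^\ast=f^{-i_0}(U_2)$ is what reconciles this shift with the pairing between $\Delta(\mathcal{F})$ and $\Delta(\mathcal{F})^\ast$, and it is here that surjectivity of $f$, guaranteeing $U_2^\ast\neq\emptyset$, is invoked; this is the step I would take most care to justify.
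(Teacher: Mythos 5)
Your proof is correct and follows essentially the same route as the paper's: transitivity of $g$ supplies the shift (your $i_0$, the paper's $j$), $\mathcal{F}$-recurrence of a point in the shrunken neighborhood produces a member of $\Delta(\mathcal{F})$ inside the shifted return-time set (the paper via $\Delta(N(y,\delta))\subset N(B_\delta(y),B_\delta(y))$, you via $\Delta(A)$ with $N(y_1,\epsilon)\subset A$ and upward heredity), and the shift is absorbed on the $f$-side by passing to the preimage $f^{-i_0}(U_2)$, exactly as the paper uses $N(U,f^{-j}(V))+j\subset N(U,V)$. Your only addition is the explicit check that transitivity of $f$ forces surjectivity, so that $f^{-i_0}(U_2)$ is nonempty open --- a point the paper leaves implicit in this proof (though it records the same fact in the proof of Lemma 2.5).
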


In Section 2, we prove these lemmas. We should note that Lemma 1.3 can be prove by a similar argument as in the proof of Theorem 4.9 in \cite{HY}, however we prove it for the sake of completeness.

\begin{rem}
\normalfont
Let $f\colon X\to X$ be a continuous map and let $\Lambda$ be a closed $f$-invariant subset of $X$.
\begin{itemize}
\item[(1)] For $k\ge1$, let $k\mathbb{N}_0=\{0,k,2k,3k,\dots\}$. We define a Furstenberg family $\mathcal{F}_{\rm rr}$ by
\[
\mathcal{F}_{\rm rr}=\{A\subset\mathbb{N}_0\colon\text{$k\mathbb{N}_0\subset A$ for some $k\ge1$}\}.
\]
Every $x\in R(f,\mathcal{F}_{\rm rr})$ is called a {\em regularly recurrent} point for $f$. If $f|_{\Lambda}$ is minimal and equicontinuous, then by Lemma 1.2, $\Lambda\cap R(f,\mathcal{F}_{\rm rr})\ne\emptyset$ implies $\Lambda\subset R(f,\mathcal{F}_{rr})$. We know that whenever $f|_{\Lambda}$ is minimal, $\Lambda\subset R(f,\mathcal{F}_{\rm rr})$ holds exactly if $\Lambda$ is a periodic orbit for $f$ (if $|\Lambda|<\infty$) or an odometer (if $|\Lambda|=\infty$) (see Corollary 2.5 of \cite{BK} in which an odometer is called an adding machine). In both cases, $f|_{\Lambda}$ is equicontinuous. It is easy to see that $f$ is $\mathcal{F}_{\rm rr}^\ast$-transitive if and only if $f$ is {\em totally transitive}, i.e., $f^k$ is transitive for all $k\ge1$. We also see that $f$ is totally transitive if and only if
\[
f\times g\colon X\times Y\to X\times Y
\]
is transitive for every continuous self-map $g\colon Y\to Y$ such that $Y$ is a periodic orbit or an odometer.
\item[(2)] We define a Furstenberg family $\mathcal{F}_{\rm b}$ as the set of $A\subset\mathbb{N}_0$ such that there are
\begin{itemize}
\item a minimal equicontinuous self-map $g\colon Y\to Y$ of a compact metric space $Y$,
\item $y\in Y$ and $\delta>0$
\end{itemize}
such that $N(y,\delta)\subset A$. If $f|_{\Lambda}$ is minimal and equicontinuous, then $\Lambda\subset R(f,\mathcal{F}_{\rm b})$. We know that $f$ is $\Delta(\mathcal{F}_{\rm b})^\ast$-transitive if and only if $f$ is {\em weakly scattering}, i.e.,
\[
f\times g\colon X\times Y\to X\times Y
\]
is transitive for every minimal equicontinuous self-map $g\colon Y\to Y$ of a compact metric space $Y$ (see Theorem 4.12 of \cite{HY}).
\item[(3)] We define two Furstenberg families $\mathcal{F}_{\rm s}$ and $\mathcal{F}_{\rm t}$ by
\[
\mathcal{F}_{\rm s}=\{A\subset\mathbb{N}_0\colon\text{$\exists k\ge1$ s.t.\:$A\cap\{i,i+1,\dots,i+k-1\}\ne\emptyset$ for $\forall i\ge0$}\}
\]
and
\[
\mathcal{F}_{\rm t}=\{B\subset\mathbb{N}_0\colon\text{for $\forall j\ge1$ $\exists i_j\ge0$ s.t.\:$\{i_j,i_j+1,\dots,i_j+j-1\}\subset B$}\}.
\]
Note that $\mathcal{F}_{\rm s}^\ast=\mathcal{F}_{\rm t}$. It is well-known that if $f|_{\Lambda}$ is minimal, then $\Lambda\subset R(f,\mathcal{F}_{\rm s})$. We know that
\begin{itemize}
\item $f$ is $\Delta(\mathcal{F}_{\rm s})^\ast$-transitive if and only if $f$ is {\em scattering}, i.e.,
\[
f\times g\colon X\times Y\to X\times Y
\]
is transitive for every minimal continuous self-map $g\colon Y\to Y$ of a compact metric space $Y$ (see Theorem 4.10 of \cite{HY}),
\item $f$ is $\mathcal{F}_{\rm t}$-transitive if and only if $f$ is {\em weakly mixing}, i.e.,
\[
f\times f\colon X\times X\to X\times X
\]
is transitive (see Proposition 7.2 of \cite{A}).
\end{itemize}
\item[(4)] Since
\[
\mathcal{F}_{\rm rr}\subset\Delta(\mathcal{F}_{\rm b})\subset\Delta(\mathcal{F}_{\rm s})\subset\mathcal{F}_{\rm s},
\]
we have
\[
\mathcal{F}_{\rm t}\subset\Delta(\mathcal{F}_{\rm s})^\ast\subset\Delta(\mathcal{F}_{\rm b})^\ast\subset\mathcal{F}_{rr}^\ast,
\]
which implies that for any continuous map $f\colon X\to X$, $f$ is
\[
\text{weakly mixing $\implies$ scattering $\implies$ weakly scattering $\implies$ totally transitive}.
\]
\end{itemize}
\end{rem}

This paper consists of two sections. In the next section, we prove Theorem 1.1, Lemmas 1.2 and 1.3.
 
\section{Proofs of Theorems 1.1, Lemmas 1.2 and 1.3}

In this section, we prove Theorems 1.1, Lemmas 1.2 and 1.3. The proof of Theorem 1.1 is structured as a step-by-step proof of a series of lemmas, the meaning of each of which should be clear.

Let $f\colon X\to X$ be a continuous map and let $\Lambda$ be a closed $f$-invariant subset of $X$ such that $f|_{\Lambda}\colon\Lambda\to\Lambda$ is minimal and equicontinuous.

\begin{lem}
For every $\epsilon>0$, $X=\overline{\bigcup_{x\in\Lambda}V_\epsilon^s(x)}$ implies $X=\bigcup_{x\in\Lambda}\overline{V_{2\epsilon}^s(x)}$.
\end{lem}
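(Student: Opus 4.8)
The plan is to establish the non-trivial inclusion $X\subset\bigcup_{x\in\Lambda}\overline{V_{2\epsilon}^s(x)}$ directly, the reverse inclusion being immediate since each set $\overline{V_{2\epsilon}^s(x)}$ is contained in $X$. So I fix $\epsilon>0$, assume $X=\overline{\bigcup_{x\in\Lambda}V_\epsilon^s(x)}$, and take an arbitrary $y\in X$. The density hypothesis yields a sequence $(x_k)$ in $\Lambda$ and points $y_k\in V_\epsilon^s(x_k)$ with $y_k\to y$. Because $\Lambda$ is a closed subset of the compact space $X$, it is compact, so after passing to a subsequence I may assume $x_k\to x$ for some $x\in\Lambda$; along this subsequence we still have $y_k\to y$. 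The goal then reduces to showing $y\in\overline{V_{2\epsilon}^s(x)}$.

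The heart of the argument is to upgrade the membership $y_k\in V_\epsilon^s(x_k)$ to $y_k\in V_{2\epsilon}^s(x)$ for all large $k$, and this is exactly where the equicontinuity of $f|_\Lambda$ together with the slack between $\epsilon$ and $2\epsilon$ is used. Applying the definition of equicontinuity with $\eta=\epsilon$, I obtain $\delta>0$ such that $d(x_k,x)\le\delta$ forces $\sup_{i\ge0}d(f^i(x_k),f^i(x))\le\epsilon$ (this is legitimate precisely because $x_k,x\in\Lambda$). For $k$ large enough that $d(x_k,x)\le\delta$, the triangle inequality gives
\[
d(f^i(x),f^i(y_k))\le d(f^i(x),f^i(x_k))+d(f^i(x_k),f^i(y_k))\le\epsilon+d(f^i(x_k),f^i(y_k))
\]
for every $i\ge0$. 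Taking $\limsup_{i\to\infty}$ and using $y_k\in V_\epsilon^s(x_k)$ yields $\limsup_{i\to\infty}d(f^i(x),f^i(y_k))\le2\epsilon$, that is, $y_k\in V_{2\epsilon}^s(x)$.

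Since $y_k\to y$ and $y_k\in V_{2\epsilon}^s(x)$ for all large $k$, I conclude $y\in\overline{V_{2\epsilon}^s(x)}\subset\bigcup_{x'\in\Lambda}\overline{V_{2\epsilon}^s(x')}$. As $y\in X$ was arbitrary, this gives $X=\bigcup_{x\in\Lambda}\overline{V_{2\epsilon}^s(x)}$, as desired.

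I expect no serious obstacle here: the single delicate point is that equicontinuity must be invoked for the pair $x_k,x\in\Lambda$ rather than for arbitrary points of $X$, and the factor-of-two relaxation in the conclusion is precisely what absorbs the equicontinuity error $\epsilon$ incurred when transferring from the basepoint $x_k$ to the limit basepoint $x$. It is worth noting that minimality of $f|_\Lambda$ plays no role in this particular step; only its equicontinuity and the compactness of $\Lambda$ are needed.
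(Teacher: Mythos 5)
Your proof is correct and follows essentially the same route as the paper: approximate $y$ by points $y_k\in V_\epsilon^s(x_k)$, extract a convergent subsequence $x_k\to x\in\Lambda$ (the paper asserts this directly; you justify it via compactness of $\Lambda$), and use equicontinuity of $f|_\Lambda$ to get $V_\epsilon^s(x_k)\subset V_{2\epsilon}^s(x)$ for large $k$, whence $y\in\overline{V_{2\epsilon}^s(x)}$. Your explicit triangle-inequality computation and the remark that equicontinuity is invoked only for points of $\Lambda$ merely spell out steps the paper states more tersely.
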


\begin{proof}
If $X=\overline{\bigcup_{x\in\Lambda}V_\epsilon^s(x)}$, then for every $y\in X$, there are sequences $x_j\in\Lambda$, $y_j\in V_\epsilon^s(x_j)$, $j\ge1$, and  $x\in\Lambda$ such that $\lim_{j\to\infty}x_j=x$ and $\lim_{j\to\infty}y_j=y$. Since $f|_{\Lambda}\colon\Lambda\to\Lambda$ is equicontinuous, we have $\delta>0$ such that $d(x,z)\le\delta$ implies
\[
\sup_{i\ge0}d(f^i(x),f^i(z))\le\epsilon
\]
and so $V_\epsilon^s(z)\subset V_{2\epsilon}^s(x)$ for all $z\in\Lambda$. It follows that $y_j\in V_\epsilon^s(x_j)\subset V_{2\epsilon}^s(x)$ for all sufficiently large $j\ge1$, which implies $y\in\overline{V_{2\epsilon}^s(x)}$. Since $y\in X$ is arbitrary, we obtain  $X=\bigcup_{x\in\Lambda}\overline{V_{2\epsilon}^s(x)}$, proving the lemma.
\end{proof}

\begin{lem}
Let $\Gamma$ be a countable dense subset of $\Lambda$. For every $\epsilon>0$, $X=\bigcup_{x\in\Lambda}\overline{V_\epsilon^s(x)}$ implies $X=\bigcup_{x\in\Gamma}\overline{V_{2\epsilon}^s(x)}$.
\end{lem}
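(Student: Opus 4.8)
The plan is to mirror the proof of Lemma 2.1, again extracting a uniform modulus of continuity from the equicontinuity of $f|_\Lambda$ and then using the density of $\Gamma$ in $\Lambda$ to replace an arbitrary base point $x\in\Lambda$ by a nearby point of $\Gamma$, at the cost of enlarging $\epsilon$ to $2\epsilon$.

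First I would fix $\epsilon>0$ and invoke the equicontinuity of $f|_\Lambda$ to obtain $\delta>0$ such that, for all $x,z\in\Lambda$, $d(x,z)\le\delta$ implies $\sup_{i\ge0}d(f^i(x),f^i(z))\le\epsilon$. The key pointwise inclusion is then the following: if $x,z\in\Lambda$ satisfy $d(x,z)\le\delta$, then $V_\epsilon^s(x)\subset V_{2\epsilon}^s(z)$. Indeed, for $w\in V_\epsilon^s(x)$ the triangle inequality gives $\limsup_{i\to\infty}d(f^i(z),f^i(w))\le\sup_{i\ge0}d(f^i(z),f^i(x))+\limsup_{i\to\infty}d(f^i(x),f^i(w))\le\epsilon+\epsilon=2\epsilon$, so $w\in V_{2\epsilon}^s(z)$. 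Passing to closures preserves this inclusion, whence $\overline{V_\epsilon^s(x)}\subset\overline{V_{2\epsilon}^s(z)}$.

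Next, given any $y\in X$, the hypothesis $X=\bigcup_{x\in\Lambda}\overline{V_\epsilon^s(x)}$ furnishes some $x\in\Lambda$ with $y\in\overline{V_\epsilon^s(x)}$. Since $\Gamma$ is dense in $\Lambda$, I would choose $z\in\Gamma$ with $d(x,z)\le\delta$; the inclusion above then yields $y\in\overline{V_\epsilon^s(x)}\subset\overline{V_{2\epsilon}^s(z)}\subset\bigcup_{x\in\Gamma}\overline{V_{2\epsilon}^s(x)}$. As $y\in X$ was arbitrary, this establishes $X=\bigcup_{x\in\Gamma}\overline{V_{2\epsilon}^s(x)}$, completing the proof.

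I expect no genuine obstacle here; the only point requiring care is the direction of the inclusion—one must enlarge the radius on the $\Gamma$-side, forming $V_{2\epsilon}^s(z)$ rather than $V_{2\epsilon}^s(x)$—together with the observation that passing to closures is harmless because set inclusion is preserved under closure. It is worth noting that equicontinuity is used only on $\Lambda$ (both $x$ and the approximating $z$ lie in $\Lambda$), which is precisely where it is available.
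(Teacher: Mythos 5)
Your proof is correct and follows essentially the same route as the paper: the same key inclusion $V_\epsilon^s(x)\subset V_{2\epsilon}^s(z)$ for $x,z\in\Lambda$ with $d(x,z)\le\delta$, obtained from equicontinuity of $f|_\Lambda$, followed by density of $\Gamma$ in $\Lambda$ and passing to closures. The only cosmetic difference is that you verify the covering pointwise at each $y\in X$ while the paper states the inclusion at the level of unions, which is the same argument.
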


\begin{proof}
Let $\epsilon>0$. Given any $x\in\Lambda$, since $f|_{\Lambda}\colon\Lambda\to\Lambda$ is equicontinuous, there is $\delta>0$ such that $d(x,z)\le\delta$ implies
\[
\sup_{i\ge0}d(f^i(x),f^i(z))\le\epsilon
\]
and so $V_\epsilon^s(x)\subset V_{2\epsilon}^s(z)$ for all $z\in\Lambda$. Since $\Gamma$ is dense in $\Lambda$, by taking $z\in\Gamma$ with $d(x,z)\le\delta$, we obtain $V_\epsilon^s(x)\subset V_{2\epsilon}^s(z)$ and so
\[
\overline{V_\epsilon^s(x)}\subset\overline{V_{2\epsilon}^s(z)}.
\]
Since $x\in\Lambda$ is arbitrary, it follows that
\[
\bigcup_{x\in\Lambda}\overline{V_\epsilon^s(x)}\subset\bigcup_{z\in\Gamma}\overline{V_{2\epsilon}^s(z)}.
\] 
If $X=\bigcup_{x\in\Lambda}\overline{V_\epsilon^s(x)}$, then we obtain
\[
X=\bigcup_{x\in\Lambda}\overline{V_\epsilon^s(x)}=\bigcup_{z\in\Gamma}\overline{V_{2\epsilon}^s(z)},
\]
completing the proof.
\end{proof}

\begin{lem}
For any $x\in\Lambda$ and $\epsilon>0$, if ${\rm int}[\overline{V_\epsilon^s(x)}]\ne\emptyset$, and if
\[
f|_\Lambda\times f\colon\Lambda\times X\to\Lambda\times X
\]
is transitive, then $X=\overline{V_{2\epsilon}^s(x)}$.
\end{lem}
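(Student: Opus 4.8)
The plan is to prove the equivalent statement that $V_{2\epsilon}^s(x)$ is dense in $X$, so I fix an arbitrary nonempty open set $W\subseteq X$ and aim to produce a point of $W\cap V_{2\epsilon}^s(x)$. Write $U={\rm int}[\overline{V_\epsilon^s(x)}]$, which is nonempty and open by hypothesis, and set $O=\{z\in\Lambda\colon d(z,x)<\eta\}$ for a small $\eta>0$ to be fixed in the next step. Both $O\times U$ and $O\times W$ are nonempty open subsets of $\Lambda\times X$, so transitivity of $f|_\Lambda\times f$ furnishes an $i\ge0$ and a pair $(a,b)\in O\times U$ with $(f^i(a),f^i(b))\in O\times W$. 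The point of using the same ball $O$ in both $\Lambda$-coordinates is that $a$ and $f^i(a)$ then both lie in $O$, so $d(a,f^i(a))<2\eta$, while simultaneously $b\in U$ and $f^i(b)\in W$.

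The heart of the argument — and the step I expect to be the \emph{main obstacle} — is to upgrade the single near-return $d(a,f^i(a))<2\eta$ into the uniform statement that $f^i$ is a near-identity on all of $\Lambda$: namely $d(z,f^i(z))\le\epsilon$ for every $z\in\Lambda$. This is exactly where equicontinuity and minimality of $f|_\Lambda$ are used. Given $\epsilon$, I choose $\delta'>0$ from equicontinuity of $f|_\Lambda$ so that $d(u,v)\le\delta'$ (with $u,v\in\Lambda$) forces $\sup_{k\ge0}d(f^k(u),f^k(v))\le\epsilon$, and I now fix $\eta\le\delta'/2$, so that $d(a,f^i(a))<2\eta\le\delta'$. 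Applying equicontinuity to the pair $(a,f^i(a))$ and using $f^k\circ f^i=f^i\circ f^k$ gives $d(f^k(a),f^i(f^k(a)))\le\epsilon$ for all $k\ge0$. Since $\{f^k(a)\colon k\ge0\}$ is dense in $\Lambda$ by minimality, for any $z\in\Lambda$ I pick $k_j$ with $f^{k_j}(a)\to z$ and pass to the limit, using continuity of $f^i$, to conclude $d(z,f^i(z))\le\epsilon$.

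It remains to seat the desired point inside $W$. The transitivity step only gave $b\in U\subseteq\overline{V_\epsilon^s(x)}$, not $b\in V_\epsilon^s(x)$, so I perturb: since $U$ is open and contained in $\overline{V_\epsilon^s(x)}$, the set $V_\epsilon^s(x)$ is dense in $U$, while $U\cap f^{-i}(W)$ is a nonempty open subset of $U$ (it contains $b$). Hence I may select $\tilde b\in V_\epsilon^s(x)\cap U\cap f^{-i}(W)$. Put $y=f^i(\tilde b)\in W$; the claim is that $y\in V_{2\epsilon}^s(x)$. For this I estimate, for each $k\ge0$,
\[
d(f^k(x),f^k(y))=d(f^k(x),f^{k+i}(\tilde b))\le d(f^k(x),f^{k+i}(x))+d(f^{k+i}(x),f^{k+i}(\tilde b)).
\]
The first term equals $d(z,f^i(z))\le\epsilon$ with $z=f^k(x)\in\Lambda$, by the near-identity property just established; the second term has $\limsup_{k\to\infty}\le\epsilon$ after reindexing $m=k+i$, because $\tilde b\in V_\epsilon^s(x)$. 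Therefore $\limsup_{k\to\infty}d(f^k(x),f^k(y))\le2\epsilon$, i.e. $y\in W\cap V_{2\epsilon}^s(x)$.

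Since $W$ was an arbitrary nonempty open set, this shows $V_{2\epsilon}^s(x)$ is dense and hence $X=\overline{V_{2\epsilon}^s(x)}$. I note that all three structural inputs combine here in distinct roles: equicontinuity together with minimality propagates a single near-return across the whole of $\Lambda$ (the crux), the interior hypothesis seeds the open set $U$ on which $V_\epsilon^s(x)$ is dense, and transitivity of $f|_\Lambda\times f$ is what couples a near-return time of $\Lambda$ with an image of the stable set landing in the prescribed target $W$.
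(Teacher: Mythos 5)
Your proof is correct, and it follows the same overall skeleton as the paper's: apply transitivity of $f|_\Lambda\times f$ to a product of a small ball about $x$ in $\Lambda$ with the interior of $\overline{V_\epsilon^s(x)}$, extract a time $i$ that is simultaneously a near-return time on $\Lambda$ and carries a point of the interior into the target open set, then perturb inside the closure to replace that point by a genuine $\tilde b\in V_\epsilon^s(x)$ and conclude $f^i(\tilde b)\in W\cap V_{2\epsilon}^s(x)$ via a triangle inequality. The one genuine difference is the middle step. You upgrade the single near-return $d(a,f^i(a))\le\delta'$ into a uniform near-identity $d(z,f^i(z))\le\epsilon$ for all $z\in\Lambda$, and for this you invoke \emph{minimality} (density of the orbit of $a$) on top of equicontinuity. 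The paper avoids minimality entirely in this lemma: it applies equicontinuity twice, once with modulus $\delta$ for $\epsilon$ and once with modulus $\gamma$ for $\delta/2$, and asks transitivity for a point $z\in\Lambda$ with $d(x,z)\le\gamma$ and $d(x,f^i(z))\le\delta/2$, so that the triangle inequality through $f^i(z)$ pins down $d(x,f^i(x))\le\delta$ directly; this yields $V_\epsilon^s(f^i(x))\subset V_{2\epsilon}^s(x)$, which is all the final estimate needs (only control along the orbit of $x$, not all of $\Lambda$). So the paper's argument proves the lemma under equicontinuity and invariance alone, which is slightly sharper as a standalone statement, while your route purchases a stronger and reusable intermediate fact --- that in a minimal equicontinuous system any near-return time is an $\epsilon$-almost-period of the entire set $\Lambda$ --- at the cost of consuming the minimality hypothesis (which is, to be fair, part of the standing assumptions of the section, so your usage is legitimate).
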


\begin{proof}
Let $V$ be a non-empty open subset of $X$. Since $x\in\Lambda$ and $f|_\Lambda$ is equicontinuous, there is $\delta>0$ such that $d(x,b)\le\delta$ implies
\[
\sup_{i\ge0}d(f^i(x),f^i(b))\le\epsilon
\]
and so $V_\epsilon^s(b)\subset V_{2\epsilon}^s(x)$ for all $b\in\Lambda$. Again since $x\in\Lambda$ and $f|_\Lambda$ is equicontinuous, there is $\gamma>0$ such that $d(x,a)\le\gamma$ implies
\[
\sup_{i\ge0}d(f^i(x),f^i(a))\le\delta/2
\]
for all $a\in\Lambda$. Since ${\rm int}[\overline{V_\epsilon^s(x)}]\ne\emptyset$ and $f|_\Lambda\times f$ is transitive, there are $z\in\Lambda$, $p\in X$, and $i\ge0$ such that
\begin{itemize}
\item $d(x,z)\le\gamma$ and $d(x,f^i(z))\le\delta/2$,
\item $p\in{\rm int}[\overline{V_\epsilon^s(x)}]$ and $f^i(p)\in V$.
\end{itemize}
By $f^i(x)\in\Lambda$ and
\[
d(x,f^i(x))\le d(x,f^i(z))+d(f^i(x),f^i(z))\le\delta/2+\delta/2=\delta,
\]
we obtain $V_\epsilon^s(f^i(x))\subset V^s_{2\epsilon}(x)$. Since
\[
p\in{\rm int}[\overline{V_\epsilon^s(x)}]\subset\overline{V_\epsilon^s(x)}
\]
we have $f^i(q)\in V$ for some $q\in V_\epsilon^s(x)$. It follows that
\[
f^i(q)\in f^i(V_\epsilon^s(x))\subset V_\epsilon^s(f^i(x))\subset V_{2\epsilon}^s(x)
\]
and so
\[
f^i(q)\in V\cap V_{2\epsilon}^s(x).
\]
Since $V$ is arbitrary, we obtain $X=\overline{V_{2\epsilon}^s(x)}$, proving the lemma.
\end{proof}
\begin{lem}
Let $x,y\in\Lambda$ and $\epsilon>0$. If $X=\overline{V_\epsilon^s(x)}$, and if $f$ is surjective, then $X=\overline{V_{2\epsilon}^s(y)}$.
\end{lem}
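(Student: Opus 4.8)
The plan is to transport the density statement $X=\overline{V_\epsilon^s(x)}$ from $x$ to $y$ by sliding along the forward orbit of $x$ inside $\Lambda$. Two ingredients drive this: surjectivity of $f$, which lets the identity $X=\overline{V_\epsilon^s(x)}$ be pushed forward to every iterate $f^i(x)$; and minimality of $f|_\Lambda$, which guarantees that the forward orbit $\{f^i(x):i\ge0\}$ comes arbitrarily close to $y$ inside $\Lambda$. Equicontinuity of $f|_\Lambda$ then converts spatial closeness in $\Lambda$ into a uniform $\limsup$-bound, upgrading $V_\epsilon^s$ at a nearby iterate of $x$ into $V_{2\epsilon}^s$ at $y$.

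First I would invoke Remark 1.4: since $f$ is surjective and $X=\overline{V_\epsilon^s(x)}$, we have $X=\overline{V_\epsilon^s(f^i(x))}$ for every $i\ge0$. Concretely, $f^i(V_\epsilon^s(x))\subset V_\epsilon^s(f^i(x))$, because for $w\in V_\epsilon^s(x)$ one has $\limsup_j d(f^j(f^i(x)),f^j(f^i(w)))=\limsup_k d(f^k(x),f^k(w))\le\epsilon$; applying the surjection $f^i$ to $\overline{V_\epsilon^s(x)}=X$ and using continuity then gives $X\subset\overline{V_\epsilon^s(f^i(x))}$.

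Next, since $y\in\Lambda$ and $f|_\Lambda$ is equicontinuous, I would choose $\delta>0$ so that $b\in\Lambda$ with $d(y,b)\le\delta$ forces $\sup_{i\ge0}d(f^i(y),f^i(b))\le\epsilon$. A triangle-inequality estimate then yields the inclusion $V_\epsilon^s(b)\subset V_{2\epsilon}^s(y)$ for every such $b$: if $z\in V_\epsilon^s(b)$, then $\limsup_i d(f^i(y),f^i(z))\le\sup_i d(f^i(y),f^i(b))+\limsup_i d(f^i(b),f^i(z))\le\epsilon+\epsilon=2\epsilon$.

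Finally, because $f|_\Lambda$ is minimal, the forward orbit $\{f^i(x):i\ge0\}$ is dense in $\Lambda$, so there is $i\ge0$ with $d(y,f^i(x))\le\delta$; note $f^i(x)\in\Lambda$ as $\Lambda$ is $f$-invariant. Taking $b=f^i(x)$ in the inclusion above gives $V_\epsilon^s(f^i(x))\subset V_{2\epsilon}^s(y)$, hence $\overline{V_\epsilon^s(f^i(x))}\subset\overline{V_{2\epsilon}^s(y)}$. Combined with the first step $X=\overline{V_\epsilon^s(f^i(x))}$, this forces $X=\overline{V_{2\epsilon}^s(y)}$. I do not expect a genuine obstacle here; the only points requiring care are orienting everything toward the forward orbit (minimality provides density of the forward, not the two-sided, orbit) and applying equicontinuity in the correct direction, so that the two error terms accumulate to exactly $2\epsilon$.
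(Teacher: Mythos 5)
Your proof is correct and follows essentially the same route as the paper's: equicontinuity at $y$ yields $\delta$ with $V_\epsilon^s(b)\subset V_{2\epsilon}^s(y)$ for $b\in\Lambda$ near $y$, minimality supplies $i\ge0$ with $d(y,f^i(x))\le\delta$, and surjectivity transports the density of $V_\epsilon^s(x)$ forward via $f^i(V_\epsilon^s(x))\subset V_\epsilon^s(f^i(x))$. The only (cosmetic) difference is that you invoke the pushforward-density fact as a packaged first step --- which is the final remark labelled Remark 1.5 in the paper, not Remark 1.4, though you reprove it inline anyway --- whereas the paper unwinds the same argument directly against an arbitrary non-empty open set $V$.
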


\begin{proof}
Let $V$ be a non-empty open subset of $X$. Since $y\in\Lambda$ and $f|_{\Lambda}$ is equicontinuous, there is $\delta>0$ such that $d(y,z)\le\delta$ implies
\[
\sup_{i\ge0}d(f^i(y),f^i(z))\le\epsilon
\]
and so $V_\epsilon^s(z)\subset V_{2\epsilon}^s(y)$ for all $z\in\Lambda$. Since $x\in\Lambda$ and $f|_{\Lambda}$ is minimal, we have
\[
\Lambda=\overline{\{f^i(x)\colon i\ge0\}},
\]
which implies $d(y,f^i(x))\le\delta$ and so $V_\epsilon^s(f^i(x))\subset V_{2\epsilon}^s(y)$ for some $i\ge0$. Since $f$ is surjective, we have $f^i(p)\in V$ for some $p\in X$. By $X=\overline{V_\epsilon^s(x)}$, we obtain $f^i(q)\in V$ for some $q\in V_\epsilon^s(x)$. It follows that
\[
f^i(q)\in f^i(V_\epsilon^s(x))\subset V_\epsilon^s(f^i(x))\subset V_{2\epsilon}^s(y);
\]
therefore,
\[
f^i(q)\in V\cap V_{2\epsilon}^s(y).
\]
Since $V$ is arbitrary, we obtain $X=\overline{V_{2\epsilon}^s(y)}$, completing the proof.
\end{proof}

\begin{lem}
If $X=\overline{\bigcup_{x\in\Lambda}V_\epsilon^s(x)}$ for all $\epsilon>0$, and if
\[
f|_\Lambda\times f\colon\Lambda\times X\to\Lambda\times X
\]
is transitive, then $X=\overline{V_\epsilon^s(x)}$ for all $x\in\Lambda$ and $\epsilon>0$.
\end{lem}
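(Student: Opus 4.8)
The plan is to chain the four preceding lemmas together with the Baire category theorem, reducing the assertion ``for all $x\in\Lambda$'' first to the existence of a single good base point and then spreading that information over the whole space. Fix $\epsilon>0$ and set $\delta=\epsilon/16$; since the hypothesis $X=\overline{\bigcup_{x\in\Lambda}V_\delta^s(x)}$ is assumed for every radius, I may work with this particular $\delta$ and aim to reach $X=\overline{V_\epsilon^s(x)}$ for every $x\in\Lambda$.

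First I would run the ``deterministic'' part of the chain. Applying Lemma 2.1 converts the hypothesis into $X=\bigcup_{x\in\Lambda}\overline{V_{2\delta}^s(x)}$, and applying Lemma 2.2 with a fixed countable dense subset $\Gamma$ of $\Lambda$ (which exists because $X$, hence $\Lambda$, is separable) yields $X=\bigcup_{x\in\Gamma}\overline{V_{4\delta}^s(x)}$. This exhibits the compact---in particular complete---metric space $X$ as a countable union of closed sets, so the Baire category theorem furnishes some $x_0\in\Gamma\subset\Lambda$ with ${\rm int}[\overline{V_{4\delta}^s(x_0)}]\ne\emptyset$. Lemma 2.3, whose transitivity hypothesis is exactly the one in force, then upgrades this to $X=\overline{V_{8\delta}^s(x_0)}$ for this single point $x_0$. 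Finally, Lemma 2.4 would promote the statement from $x_0$ to every $x\in\Lambda$, giving $X=\overline{V_{16\delta}^s(x)}=\overline{V_\epsilon^s(x)}$, as desired.

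The only genuine gap is that Lemma 2.4 requires $f$ to be surjective, and surjectivity is not listed among the hypotheses; I expect this to be the main obstacle, to be resolved by extracting surjectivity from the transitivity of $f|_\Lambda\times f$. If $|\Lambda|=1$, say $\Lambda=\{a\}$, the hypothesis already reads $X=\overline{V_\epsilon^s(a)}$ and there is nothing further to prove. If $|\Lambda|\ge2$, I would argue as follows: since $f|_\Lambda$ is minimal it is onto $\Lambda$, so $(f|_\Lambda\times f)^i(\Lambda\times X)=\Lambda\times f^i(X)$ for all $i\ge0$, while $f(X)$ is closed by compactness. Were $W:=X\setminus f(X)$ nonempty, the open set $\Lambda\times W$ would contain at least two points, hence---the ambient space being Hausdorff---two disjoint nonempty open subsets $O_1,O_2$ of $\Lambda\times X$ with $O_2\subset\Lambda\times W$. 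Transitivity would give some $i\ge0$ with $(f|_\Lambda\times f)^i(O_1)\cap O_2\ne\emptyset$; but $i=0$ is impossible since $O_1\cap O_2=\emptyset$, and for $i\ge1$ one has $(f|_\Lambda\times f)^i(O_1)\subset\Lambda\times f(X)$, which is disjoint from $\Lambda\times W\supset O_2$. This contradiction forces $f(X)=X$, so Lemma 2.4 applies and the proof is complete.
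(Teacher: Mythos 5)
Your proposal is correct and follows essentially the same route as the paper: apply Lemma 2.1, then Lemma 2.2 with a countable dense $\Gamma\subset\Lambda$, use the Baire category theorem to find a point with ${\rm int}[\overline{V_{4\delta}^s(x_0)}]\ne\emptyset$, upgrade via Lemma 2.3, and spread to all of $\Lambda$ via Lemma 2.4, with the radii bookkeeping $\delta\to16\delta$ matching the paper's $\epsilon\to16\epsilon$. The only difference is that the paper dispatches surjectivity in one line (transitivity of $f|_\Lambda\times f$ implies transitivity of $f$, hence surjectivity since $f(X)$ is closed), whereas you prove it directly in the product, correctly handling the $i=0$ case permitted by the paper's definition of transitivity; this is a legitimate, slightly more detailed justification of the same step.
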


\begin{proof}
Given any $\epsilon>0$, by Lemma 2.1, we have $X=\bigcup_{x\in\Lambda}\overline{V_{2\epsilon}^s(x)}$. By Lemma 2.2, taking a countable dense subset $\Gamma$ of $\Lambda$, we have $X=\bigcup_{x\in\Gamma}\overline{V_{4\epsilon}^s(x)}$. By Baire category theorem, we obtain
\[
{\rm int}[\overline{V_{4\epsilon}^s(p)}]\ne\emptyset
\]
for some $p\in\Gamma$. Since
\[
f|_\Lambda\times f\colon\Lambda\times X\to\Lambda\times X
\]
is transitive, Lemma 2.3 implies $X=\overline{V_{8\epsilon}^s(p)}$. Note that $f$ is surjective because $f|_\Lambda\times f$ and so $f$ is transitive. By Lemma 2.4, we obtain $X=\overline{V_{16\epsilon}^s(x)}$ for all $x\in\Lambda$. Since $\epsilon>0$ is arbitrary, we obtain $X=\overline{V_\epsilon^s(x)}$ for all $x\in\Lambda$ and $\epsilon>0$, thus the lemma has been proved.
\end{proof}
 
Let us prove Theorem 1.1.

\begin{proof}[Proof of Theorem 1.1]
If a continuous map $f\colon X\to X$ and a closed $f$-invariant subset $\Lambda$ of $X$ satisfy conditions (1)--(3) in Theorem 1.1, then by Lemma 2.5, we have $X=\overline{V_\epsilon^s(x)}$ for all $x\in\Lambda$ and $\epsilon>0$. From Lemma 1.1, it follows that for any $2\le n<|\Lambda|+1$, ${\rm DC1}_n^{\delta_n}(X,f)$ is a residual subset of $X^n$ for some $\delta_n>0$. This completes the proof of Theorem 1.1.
\end{proof}

Finally, we prove Lemmas 1.2 and 1.3.

\begin{proof}[Proof of Lemma 1.2]
Given any $q\in\Lambda$ and $\epsilon>0$, since $f|_{\Lambda}\colon\Lambda\to\Lambda$ is equicontinuous, there is $\delta>0$ such that $d(q,y)\le\delta$ implies
\[
\sup_{j\ge0}d(f^j(q),f^j(y))\le\epsilon/3
\]
for all $y\in\Lambda$. Note that $f(x)\in R(f,\mathcal{F})$ for all $x\in R(f,\mathcal{F})$. By taking $p\in\Lambda\cap R(f,\mathcal{F})$, we obtain $f^i(p)\in R(f,\mathcal{F})$ for all $i\ge0$. Since $f|_{\Lambda}\colon\Lambda\to\Lambda$ is minimal, $p$ satisfies
\[
\Lambda=\overline{\{f^i(p)\colon i\ge0\}}
\]
and so $d(q,f^i(p))\le\delta$ for some $i\ge0$. By $f^i(p)\in R(f,\mathcal{F})$, we obtain
\[
N(f^i(p),\epsilon/3)\in\mathcal{F}.
\]
Since $f^i(p)\in\Lambda$ and $d(q,f^i(p))\le\delta$, we have
\[
d(q,f^j(q))\le d(q,f^i(p))+d(f^i(p),f^j(f^i(p)))+d(f^j(f^i(p)),f^j(q))\le\epsilon/3+\epsilon/3+\epsilon/3=\epsilon
\]
for all $j\in N(f^i(p),\epsilon/3)$. It follows that
\[
N(f^i(p),\epsilon/3)\subset N(q,\epsilon)
\]
and so
\[
N(q,\epsilon)\in\mathcal{F}.
\]
Since $q\in\Lambda$ and $\epsilon>0$ are arbitrary, we obtain $\Lambda\subset R(f,\mathcal{F})$, completing the proof.
\end{proof}

\begin{proof}[Proof of Lemma 1.3]
For $A\subset\mathbb{N}_0$ and $j\ge0$, let $A+j=\{i+j\colon i\in A\}$. Let $U_1,V_1$ be  non-empty open subsets of $Y$. Since $g$ is transitive, there are $y\in U_1$ and $j\ge0$ such that $g^j(y)\in V_1$. We take $\delta>0$ such that $B_\delta(y)\in U_1$ and $g^j(B_\delta(y))\subset V_1$ where $B_\delta(y)$ is the closed $\delta$-ball centered at $y$. It follows that
\[
N(B_\delta(y),B_\delta(y))+j\subset N(U_1,V_1)
\]
because for every $i\in N(B_\delta(y),B_\delta(y))$, there is $p\in B_\delta(y)$ such that $g^i(p)\in B_\delta(y)$ which implies $p\in U_1$ and $g^{i+j}(p)=g^j(g^i(p))\in g^j(B_\delta(y))\subset V_1$; therefore, $i+j\in N(U_1,V_1)$. Note that $0\in N(y,\delta)$. We have
\[
\Delta(N(y,\delta))\subset N(B_\delta(y),B_\delta(y))
\]
because it holds for any $k,l\in N(y,\delta)$ with $k\le l$, $g^k(y)\in B_\delta(y)$ and $g^{l-k}(g^k(y))=g^l(y)\in B_\delta(y)$; therefore, $l-k\in N(B_\delta(y),B_\delta(y))$. Let $U,V$ be  non-empty open subsets of $X$. Since $y\in Y$ and $Y=R(g,\mathcal{F})$, we have $N(y,\delta)\in\mathcal{F}$ and so $\Delta(N(y,\delta))\in\Delta(\mathcal{F})$. Since $f$ is $\Delta(\mathcal{F})^\ast$-transitive, we obtain
\[
N(U,f^{-j}(V))\cap \Delta(N(y,\delta))\ne\emptyset
\]
and so
\[
N(U,f^{-j}(V))\cap N(B_\delta(y),B_\delta(y))\ne\emptyset,
\]
implying
\[
[N(U,f^{-j}(V))+j]\cap[N(B_\delta(y),B_\delta(y))+j]\ne\emptyset
\]
and so 
\[
[N(U,f^{-j}(V))+j]\cap N(U_1,V_1)\ne\emptyset.
\]
We have
\[
N(U,f^{-j}(V))+j\subset N(U,V)
\]
because for every $i\in N(U,f^{-j}(V))$, there is $x\in U$ such that $f^i(x)\in f^{-j}(V)$ and so $f^{i+j}(x)=f^j(f^i(x))\in V$; therefore $i+j\in N(U,V)$. It follows that
\[
N(U,V)\cap N(U_1,V_1)\ne\emptyset.
\]
Since $U_1,V_1,U,V$ are arbitrary, we conclude that
\[
f\times g\colon X\times Y\to X\times Y
\]
is transitive, proving the lemma.
\end{proof}

\end{document}